\renewcommand{\phi}{\varphi}
\renewcommand{\emptyset}{\varnothing}
\newcommand{\Zd}{\mathbb{Z}^d}
\renewcommand{\Pr}{\mathbf{P}}
\theoremstyle{plain}
\newtheorem{theorem}{Theorem}
\newtheorem{lemma}{Lemma}
\newtheorem{proposition}{Proposition}
\newtheorem{corollary}{Corollary}
\theoremstyle{definition}
\theoremstyle{remark}
\newtheorem{remark}{Remark}
\newcommand{\eqn}[2]{\begin{equation}\label{#1}#2\end{equation}}
\newcommand{\eqnst}[1]{\begin{equation*}#1\end{equation*}}
\newcommand{\eqnspl}[2]{\begin{equation}\begin{split}\label{#1}%
    #2\end{split}\end{equation}}
\newcommand{\eqnsplst}[1]{\begin{equation*}\begin{split}%
    #1\end{split}\end{equation*}}
\newcommand{\Z}{\mathbb Z}
\newcommand{\E}{\mathbb E}
\def\caA{\mathcal{A}}
\def\caR{\mathcal{R}}
\def\caS{\mathcal{S}}
\def\caT{\mathcal{T}}
\def\caP{\mathcal{P}}
\def\caF{\mathcal{F}}
\def\caG{\mathcal{G}}
\def\caX{\mathcal{X}}
\def\La{\Lambda}
\def\Om{\Omega}
\newcommand{\tOm}{\widetilde{\Om}}
\def\discr{{\mathrm{discr}}}
\def\es{\emptyset}
\def\eps{\varepsilon}
\def\LE{\mathrm{LE}}
\def\bB{\mathrm{\bf{B}}}
\def\Unif{{\mathrm{Unif}}}
\def\ext{{\mathrm{ext}}}
\def\ord{\mathrm{ord}}
\def\diss{\mathrm{diss}}
\def\CMPsh{Commun.~Math.~Phys.}
\def\AOPsh{Ann.~Probab.}
\begin{document}

\title{{\bf Rate of convergence estimates for the zero 
dissipation limit in Abelian sandpiles}}

\author{
Antal A.~J\'arai
\footnote{Department of Mathematical Sciences,
University of Bath,
Claverton Down, Bath, BA2 7AY, United Kingdom, 
Email: {\tt A.Jarai@bath.ac.uk}}%
}

\maketitle

\footnotesize
\begin{quote}
{\bf Abstract}: We consider a continuous height version
of the Abelian sandpile model with small amount of 
bulk dissipation $\gamma > 0$ on each toppling,
in dimensions $d = 2, 3$. 
In the limit $\gamma \downarrow 0$, we give 
a power law upper bound, based on coupling, 
on the rate at which the 
stationary measure converges to the discrete
critical sandpile measure. The proofs are based
on a coding of the stationary measure by 
weighted spanning trees, and an analysis
of the latter via Wilson's algorithm.
In the course of the proof, we prove an estimate
on coupling a geometrically killed 
loop-erased random walk to an unkilled
loop-erased random walk. 
\end{quote}
\normalsize

{\bf Key-words}: Abelian sandpile, weighted spanning
trees, Wilson's algorithm, zero-dissipation limit, self-organized
criticality.
\vspace{12pt}

\section{Introduction}
\label{sec:intro}

In the paper \cite{JRS10} a continuous height version of the
Abelian sandpile model was studied. The reason for interest
in that model is that it allows an arbitrarily small amount
of dissipation on every toppling, and hence yields a natural
family of subcritical models approximating the discrete
(critical) sandpile. In the present paper we study the
dissipative models on $\Z^d$, $d = 2,3$, and give a power law
upper bound on the rate at which the stationary measure of the
dissipative model converges to the critical sandpile measure.
Our proof also applies in $d=4$, but would only yield a
logarithmic bound. Our methods break down for $d \ge 5$.
Hence it remains an open problem to
give a power law bound in dimensions $d \ge 4$.

Let us first recall the definition of the discrete Abelian 
sandpile model. See the survey by Redig \cite{Redig} 
for general background, and the paper \cite{HLMPPW}
for a nice introduction to the basic facts. 
Let $\Lambda \subset \Z^d$ be finite. We define the
set of stable configurations in $\La$ as 
\eqnst
{ \Om^{\discr}_\La 
  = \{ 0, 1, \dots, 2d-1 \}^\La. }
We also consider the set of all non-negative configurations
$\caX^\discr_\La = \{ 0, 1, \dots \}^\La$. Consider the 
\emph{toppling matrix}
\eqnst
{ \Delta_{xy}
  = \begin{cases}
    2d & \text{if $x = y$;}\\
    -1 & \text{if $x \sim y$;}\\
    0  & \text{otherwise.}
    \end{cases}
    \qquad x,y \in \La, }
where $x \sim y$ denotes that $x$ and $y$ are adjacent 
on the $\Z^d$ lattice. Let $\eta \in \caX^\discr_\La$.
If $\eta_x \ge 2d$, we say that $x$ can be \emph{legally toppled},
and the result of toppling $x$ is the new configuration
$T^{(x)} \eta$ defined by 
\eqnst
{ (T^{(x)} \eta)_y
  := \eta_y - \Delta_{xy}, \qquad y \in \La. }
Note that if the toppling was legal, $T^{(x)} \eta \in \caX^\discr_\La$.
If a finite sequence of legal topplings has a stable result,
it is called the \emph{stabilization of $\eta$}, and is denoted
$\caS^\discr_\La(\eta)$. It is well-known \cite{Dhar,Redig,HLMPPW}
that stablization is well-defined as a map 
$\caS^\discr_\La : \caX^\discr_\La \to \Om^{\discr}_\La$. 

The dynamics of the model is defined as a Markov chain
$(\eta(n))_{n \ge 0}$ with state space $\Om^{\discr}_\La$. 
Let $X_1, X_2, \dots$ be i.i.d.~with a fixed distribution
$\{ p(x) \}_{x \in \La}$ on $\La$, where $p(x) > 0$, $x \in \La$. 
At time $n \ge 1$ a new particle is added at $X_n$, 
and the configuration is stabilized. That is,
$\eta(n) := \caS^\discr_\La(\eta(n-1) + \delta_{X_n, \cdot})$,
where $\delta_{xy}$ is Kronecker's delta.
It is well-known \cite{Dhar,Redig} (see also 
\cite[Corollary 2.16]{HLMPPW}) that the Markov chain has
a single recurrent class $\caR^\discr_\La$, and the
stationary distribution, denoted $\nu_\La$, 
is uniform on $\caR^\discr_\La$ (irrespective of the choice of $p$). 

We now give the definition of the continuous height dissipative 
model in finite volume $\La \subset \Z^d$. Let $\gamma \ge 0$ 
be a real parameter, and define the set of stable configurations 
$\Om^{(\gamma)}_\La := [0, 2d + \gamma)^\La$. Let us also define the 
set of all non-negative configurations 
$\caX_\La := [0, \infty)^\La$. Consider the toppling matrix 
given by:
\eqnst
{ \Delta^{(\gamma)}_{xy}
  = \begin{cases}
    2d + \gamma & \text{if $x = y$;} \\
    -1          & \text{if $x \sim y$;} \\
    0           & \text{otherwise.} 
    \end{cases} 
    \qquad x,y \in \La. }
Let $\eta \in \caX_\La$. If $\eta_x \ge 2d + \gamma$ for some 
$x \in \La$, then we say that 
\emph{$x$ can be $\gamma$-legally toppled}, and
the result of toppling $x$ is the new configuration
$T^{(\gamma)}_x \eta$ defined by 
$(T^{(\gamma)}_x \eta)_y := \eta_y - \Delta^{(\gamma)}_{xy}$, $y \in \La$.
If a finite sequence of $\gamma$-legal topplings has a 
stable result, it is called the \emph{$\gamma$-stabilization} 
of $\eta$, and is denoted $\caS^{(\gamma)}_\La(\eta)$. 
By arguments similar to the discrete case, it is not difficult 
to show that $\caS^{(\gamma)}_\La$ is well-defined as a 
map $\caS^{(\gamma)}_\La : \caX_\La \to \Om^{(\gamma)}_\La$. 
see for example \cite{Dhar} or \cite[Appendix B]{G93}.

The dynamics of the dissipative model is defined as a Markov chain
$(\eta(n))_{n \ge 0}$ with state space $\Om^{(\gamma)}_\La$.
Let $X_1, X_2, \dots$ be i.i.d.~with distribution $p$. 
At time $n \ge 1$, unit height is added at $X_n$
and the configuration is stabilized according to the toppling matrix 
$\Delta^{(\gamma)}$, that is, 
$\eta(n) = \caS^{(\gamma)}_\La(\eta(n-1) + \delta_{X_n,\cdot})$.
Similarly to the discrete model, a set of recurrent configurations 
$\caR^{(\gamma)}_\La$ can be defined, and Lebesgue measure
on $\caR^{(\gamma)}_\La$ is invariant for the 
dynamics \cite[Section 2.2]{JRS10}.
We denote the invariant probability measure by $m^{(\gamma)}_\La$.

It was shown in \cite[Theorem 1]{AJ04} and \cite[Appendix]{JR08}
that for any $d \ge 2$, the measures $\nu_\La$ weakly converge 
to a limit $\nu$ on the space 
$\Om^{\discr} := \{ 0, 1, \dots, 2d-1 \}^{\Z^d}$, as 
$\La \uparrow \Z^d$ (the limit can be taken along any 
sequence of $\La$'s that exhaust $\Z^d$).
It is quite clear that the measure $m^{(0)}_\La$ should be 
closely related to $\nu_\La$. Indeed, based on results
further explained in Section \ref{sec:spanning}, one also
easily gets that $m^{(0)}_\La$ has a unique weak limit 
$m^{(0)}$.

It was further shown in \cite[Lemma 4]{JRS10}, that for
all $d \ge 2$ and any $\gamma > 0$,
the weak limit $m^{(\gamma)}_\La \Rightarrow m^{(\gamma)}$ exists
for all $d \ge 2$.
It was also shown in \cite[Proposition 4]{JRS10} 
that as $\gamma \downarrow 0$, $m^{(\gamma)} \Rightarrow m^{(0)}$. 
The goal of the present paper is to prove the following theorem.
Let $\Om^{(\gamma)} := [0,2d+\gamma)^{\Z^d}$ and 
let $\Om := [0, 2d)^{\Z^d}$.

\begin{theorem}
\label{thm:main}
Suppose that the cylinder event $E \subset \Omega$ 
depends only on the heights in $B(k) = [-k,k]^d \cap \Z^d$. 

(1) If $d = 3$, there exist constants 
$C < \infty$, $\eta > 0$ such that for all $\gamma < 1$
\eqnst
{ \left| m^{(\gamma)}(E) - m^{(0)}(E) \right|
  \le C k^2 \gamma^\eta + C k^5 (\log k) \gamma. }

(2) If $d = 2$, there exist constants $c_0 > 0$ and 
$C, C_0 < \infty$ such that for all $\gamma \le c_0 k^{-C_0}$
we have
\eqn{e:mainthm}
{ \left| m^{(\gamma)}(E) - m^{(0)}(E) \right|
  \le C k^{21/23} \gamma^{1/46-o(1)}, }
with $o(1)$ denoting a positive quantity that approaches 
$0$ as $\gamma \to 0$.
\end{theorem}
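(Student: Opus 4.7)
The plan is to use the spanning-tree encoding of the stationary measures together with Wilson's algorithm to couple samples from $m^{(\gamma)}$ and $m^{(0)}$. By the Majumdar--Dhar bijection referenced in the abstract, $m^{(0)}_\La$ is encoded by the uniform spanning tree of the wired graph $\La \cup \{s\}$, whose wired limit on $\Z^d$ produces $m^{(0)}$. The dissipative measure $m^{(\gamma)}_\La$ corresponds to a weighted spanning tree on the same graph with an additional edge of weight $\gamma$ between every vertex and the sink $s$; equivalently, Wilson's algorithm for $m^{(\gamma)}$ runs simple random walks that are killed geometrically at each step with probability $\gamma/(2d+\gamma)$, while Wilson's algorithm for $m^{(0)}$ uses the same walks unkilled.

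The first step is to reduce the cylinder event $E$ on $B(k)$ to an event depending only on the tree inside a larger box $B(R)$, where $R$ will be optimized as a function of $k$ and $\gamma$. Via the burning algorithm, the heights on $B(k)$ are read off from local combinatorial data of the tree together with information about the finitely many tree branches that first leave $B(R)$. Standard estimates on LERW and spanning-tree branches bound the probability that the relevant data is not determined by the portion of the tree inside $B(R)$; this step produces the $k^5(\log k)\gamma$ correction in $d=3$ and is also responsible for the $k^{21/23}$ factor in $d=2$ after the final optimization.

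The second step is the coupling. Running Wilson's algorithm from a common enumeration of sources with the same underlying simple random walks --- one copy killed geometrically, one copy unkilled --- the two trees coincide inside $B(R)$ unless some walk is actually killed before its unkilled counterpart has reached the partial tree or escaped to infinity. The failure probability is bounded by $\gamma$ times the expected total number of steps taken during the construction of the tree on $B(R)$. The key technical input is the coupling estimate advertised in the abstract: the loop-erasure of the geometrically killed walk agrees with that of the full walk up to a carefully chosen stopping point, which can be controlled using that the diameter-to-length LERW exponent in $\Z^d$ is $5/4 + o(1)$ for $d=2$ (Kenyon, Lawler--Schramm--Werner) and strictly less than $d$ for $d=3$. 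Balancing $R$ against $\gamma$ then yields the two power-law bounds in the statement.

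The hard part will be the $d=2$ coupling estimate. Because simple random walk is recurrent in two dimensions, even a tiny killing rate $\gamma$ can dramatically alter the loop-erasure when the walk is long, so one cannot simply demand that the walk escape killing along its entire trajectory. Instead one must show that, with high probability, the killed and unkilled loop-erasures agree up to a cut point lying deep inside $B(R)$, and then handle the remaining tail by a separate argument using the precise $5/4$-scaling of LERW and second-moment estimates on its length. It is this delicate cut-point construction --- essentially the coupling lemma for geometrically killed versus unkilled LERW --- that produces the exponent $1/46$ and explains the restriction $\gamma \le c_0 k^{-C_0}$ needed to keep $R$ in a regime where the LERW length estimates are sharp.
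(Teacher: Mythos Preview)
Your overall framework is right --- spanning-tree coding, Wilson's algorithm, and a coupling via common random walks with and without geometric killing --- and this is exactly the skeleton the paper uses. But the technical inputs you name are not the ones that actually drive the bounds, and in one case they would not work.

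For $d=3$ the key tool is \emph{cut times} of the simple random walk, not the LERW growth exponent. The paper uses Lawler's cut-time estimates to show that with high probability the loop-erasure inside $\bB(m)$ is unaffected by the walk after time $\tau_n$, hence by the killing if $T>\tau_n$. The bound on the probability that the second walk fails to hit the first LERW inside $\bB(m)$ comes from Lawler's estimate $F(\lambda_1,\lambda_1)\le C\lambda_1^{1/6}$ on non-intersection of a LERW and an independent walk. The exponent $\eta$ arises from combining the cut-time exponent with this $1/6$; no diameter-to-length exponent enters.

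For $d=2$ your proposed mechanism is off in two ways. First, the $5/4$ growth exponent of planar LERW is \emph{not used anywhere} in the paper; the exponent $1/46$ comes instead from balancing five error terms built out of the Beurling estimate (exponent $1/2$), a Harnack-type bound, and Lawler's convergence rate for the Laplacian random walk to the infinite LERW. Second, your appeal to ``cut points'' is problematic: in $d=2$ the walk is recurrent and there are no global cut times, which is precisely why the paper abandons the $d=3$ strategy. What replaces it is a direct total-variation comparison of the first $R$ steps of $\hat S^\lambda$ and $\hat S$ (an adaptation of Lawler's Proposition~7.4.2, requiring a ``good path'' restriction to avoid trapping), together with a Beurling-based argument that after the LERW reaches $\partial\bB(R')$ it does not re-enter $\bB(m)$. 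The restriction $\gamma\le c_0 k^{-C_0}$ and the factor $k^{21/23}$ fall out of optimizing the scales $m\ll R'\ll R\ll n$ in this chain, not from LERW length moments.

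So the outline is correct, but the proof would not go through as you describe it: you need cut times (not LERW scaling) in $d=3$, and Beurling plus Laplacian-walk coupling (not cut points or the $5/4$ exponent) in $d=2$.
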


\begin{remark}
For certain special events, a better exponent of $\gamma$ 
was obtained in \cite[Proposition 5]{JRS10}. There the estimate
for $d \ge 3$ is of the form $C(k) \gamma$ and 
for $d = 2$ of the form $C(k) \gamma \log(1/\gamma)$.
A natural question is whether these represent the
precise rate of convergence for all events.
\end{remark}

\begin{remark}
A suitable value of $C_0$ is determined from the proof.
For $\gamma \le c (2k)^{-23}$ we have the somewhat worse 
bound $C k^{3/2} \gamma^{1/46-o(1)}$.
\end{remark}

\section{Discretized heights and spanning trees}
\label{sec:spanning}

Our proof of Theorem \ref{thm:main} is based on a certain 
``descretization'' of the measure $m^{(\gamma)}$, and a coding
of the discretized measure by weighted spanning trees.
These tools were already introduced in \cite{JRS10}, however, 
here we will need somewhat more detailed properties of the 
coding than in \cite{JRS10}. The form of the coding follows 
from the proof of \cite[Theorem 1]{AJ04}. The coding is the main link 
between Theorem \ref{thm:main} and the loop-erased random
walk estimates that make up most of the proof. We explain 
the coding in detail in this Section.

\subsection{Allowed configurations}
\label{ssec:allowed}

Let $\La \subset \Z^d$ be finite. The sets $\caR^\discr_\La$
and $\caR^{(\gamma)}_\La$ admit the following description. 
Let $\eta \in \Omega^{\discr}_\La$ or $\eta \in \Omega^{(\gamma)}_\La$
with some $\gamma \ge 0$. For $\es \not= W \subset \La$
write $\eta_W$ for the restriction of the configuration
$\eta$ to $W$. We say that $\eta_W$ is 
a \emph{forbidden subconfiguration (FSC)}, if 
\eqnst
{ \eta_y 
  < \left| \{ z \in W : z \sim y \} \right|, \qquad y \in W, }
where $|A|$ denotes the cardinality of the set $A$.
A configuration $\eta \in \Om^{\discr}_\La$ or 
$\eta \in \Om^{(\gamma)}_\La$ is called \emph{allowed}, 
if there is no
$\es \not= W \subset \La$ such that $\eta_W$ is a FSC.
It was shown in \cite{Dhar} that all elements of 
$\caR^{\discr}_\La$ are allowed, and it was shown in 
\cite{MD92}, using the ``Burning Test'' explained below, 
that all allowed configurations 
in $\Om^{\discr}_\La$ are in $\caR^{\discr}_\La$. 
The analogous statement is also true in the
continuous case: it follows from \cite[Appendix E]{G93}
that $\caR^{(\gamma)}_\La$ consists precisely of
the allowed configurations of $\Om^{(\gamma)}_\La$.

\subsection{Discretization and the uniformity property}
\label{ssec:discretize}

The characterization in terms of allowed configurations 
shows that $m^{(\gamma)}_\La$ can be completely understood
in terms of a discrete measure. Let
\eqnst
{ \Om^{\discr,*}_\La 
  := \{ 0, 1, \dots, 2d-1, 2d \}^\La, }
and define the map $\psi_\La : \caX_\La \to \Om^{\discr,*}_\La$
via 
\eqn{e:discretize}
{ \xi_x 
  := (\psi_\La (\eta))_x 
  = \begin{cases}
    h  & \text{if $h \le \eta_x < h+1$ for some 
       $h \in \{0, 1, 2, \dots, 2d-1 \}$;} \\
    2d & \text{if $\eta_x \ge 2d$. }
    \end{cases} }
We can also view $\psi_\La$ as a map from $\Om^{(\gamma)}_\La$
in a natural way, for any $\gamma \ge 0$.
It follows directly from the definition of FSCs 
that for any $\gamma \ge 0$ and any $\eta \in \Om^{(\gamma)}_\La$, 
we have the equivalence:
\eqnst
{ \text{$\eta$ is allowed} \qquad \Longleftrightarrow \qquad
  \text{$\xi = \psi_\La(\eta)$ is allowed.} }
This together with the fact that $m^{(\gamma)}_\La$ is 
normalized Lebesgues measure, implies the following statement.
\eqn{e:uniformity}
{ \parbox{12cm}{Under the measure $m^{(\gamma)}_\La$,
  and given the value of $\xi = \psi_\La(\eta)$,
  the variables $(\eta_x)_{x \in \La}$ are conditionally independent
  with conditional distributions:
  $\eta_x \sim \Unif(h,h+1)$ when $\xi_x = h$, $h = 0, 1, \dots, 2d-1$,
  and $\eta_x \sim \Unif(2d,2d+\gamma)$ when $\xi_x = 2d$.} }
We will denote by $\nu^{(\gamma)}_\La$ the image of
$m^{(\gamma)}_\La$ under the map $\psi_\La$. 
By the definition of $\psi_\La$, the measure $\nu^{(0)}_\La$ 
concentrates on $\Om^\discr_\La$, and in fact coincides with 
$\nu_\La$, by the characterization in terms of allowed
configurations. 

We now proceed to describe $\nu_\La$ and $\nu^{(\gamma)}_\La$
when $\gamma > 0$. Let us write 
\eqnst
{ \caA_\La
  = \{ \eta \in \Om^{\discr,*}_\La : \text{$\eta$ is allowed} \}. } 

We first observe that $\nu^{(\gamma)}_\La$ obeys a certain
weighting depending on $\gamma$. 
For $\xi \in \caA_\La$, let 
$H(\xi) = \left| \{ x \in \La : \xi_x = 2d \} \right|$.
Since $m^{(\gamma)}_\La$ is normalized Lebesgue measure, 
\eqref{e:uniformity} implies that
\eqn{e:weights}
{ \nu^{(\gamma)}_\La(\xi)
  = c \gamma^{H(\xi)}, \quad \xi \in \caA_\La,  }
for some constant $c = c(\gamma,\La)$. On the other hand,
as stated before, $\nu_\La$ is uniform on 
$\caA_\La \cap \Om^\discr_\La$. 

\subsection{The Burning Test}
\label{ssec:burning}

There is a simple algorithm, the Buring Test \cite{Dhar},
that checks if a configuration is allowed or not.
There is some flexibility in setting up the algorithm.
We choose one here that will work for both 
$\xi \in \Om^{\discr}_\La$ and $\xi \in \Om^{\discr,*}$. 
Set $U_0 := \La$. We call $U_0$ the set of 
vertices \emph{unburnt at time $0$}. We define
\eqnsplst
{ B_1 
  &:= \left\{ x \in U_0 : \xi_x = 2d \right\} \\
  U_1 
  &:= \La \setminus B_1. }
We inductively define for $i \ge 2$:
\eqnsplst
{ B_i 
  &:= \left\{ x \in U_{i-1} : 
     \eta_x \ge \left| \{ y \in U_{i-1} : y \sim x \} \right| \right\}; \\
  U_i
  &:= U_{i-1} \setminus B_i. }
We call $B_i$ and $U_i$ the set of vertices 
\emph{burning at time $i$} and \emph{unburnt at time $i$}, 
respectively.
We say that the algorithm \emph{terminates}, if 
$U_i = \es$ for some $i$. Using the definition of FSCs,
it can be shown by induction on $i$ that there cannot be 
any FSC containing a vertex in $B_i$, $i \ge 1$, 
and therefore, if the algorithm 
terminates, then $\eta$ is allowed. On the other hand,
if the algorithm does not terminate, and 
$\es \not= U_i = U_{i+1} = \dots =: U$, then $\eta_U$
is an FSC. Hence the algorithm terminates if and only
if $\eta$ was allowed. 

\subsection{The Majumdar-Dhar bijection with spanning trees}
\label{ssec:spanning}

A useful description of $\nu_\La$ and $\nu^{(\gamma)}_\La$
can be given in terms of weighted spanning trees, 
that we now describe. This bijection was discovered
by Majumdar and Dhar \cite{MD92}, and the extension 
given here to $\nu^{(\gamma)}_\La$ is from \cite{JRS10}.

We now define the multigraph $G_\La = (V(G_\La), E(G_\La))$ 
that will carry the weighted
spanning trees. We first define an infinite graph $G$.
Add a new vertex $\varpi$ to $\Z^d$, so the vertex set
of $G$ is $\Z^d \cup \{ \varpi \}$. The edge set of $G$
consists of: (i) for each $x, y \in \Z^d$ that are adjacent 
in the $\Z^d$ lattice, we place an edge between $x$ and $y$; 
(ii) for each $x \in \Z^d$ we place an edge between $x$ and $\varpi$.
We call the edges of type (i) \emph{ordinary}, and we call 
those of type (ii) \emph{dissipative}.
It will be convenient to refer to the ordinary edge between
$x$ and $y$ as $\ord(x,y) = \ord(y,x)$, and the dissipative
edge between $x$ and $\varpi$ as $\diss(x)$.
For later use, we also define the graph $G^{(0)}$, that is
obtained by removing $\varpi$ and all dissipative edges
from $G$ (in other words, the usual $\Z^d$ lattice). 
The multigraph $G_\La$ is now defined by identifying all vertices
of $G$ in the set $\Z^d \setminus \La$ with $\varpi$, and removing loops.
In $G_\La$ we still call edges dissipative or ordinary,
according to their origin. We also use the notation $\ord(x,y)$
and $\diss(x)$ according to the origin of the edge.
In particular, if $x \in \La$, $y \in \Z^d \setminus \La$
and $x$ and $y$ are adjecent in $\Z^d$, then 
$\ord(x,y)$ denotes an ordinary edge of $G_\La$ between
$x$ and $\varpi$. The usefulness of this is that there 
can be more than one ordinary edges between $x$ and 
$\varpi$, and these are thereby distinguished.
We further define the graph $G_\La^{(0)}$ 
by removing all dissipative edges from $G_\La$ (but
keeping the vertex $\varpi$).

Let $\caT_\La$ denote the set of spanning trees of $G_\La$,
and let $\caT^{(0)}_\La$ denote those spanning trees
that do not contain dissipative edges. The latter is
naturally identified with the set of spanning trees of $G_\La^{(0)}$. 
We now define a map $\sigma_\La : \caA_\La \to \caT_\La$.
Let $\xi \in \caA_\La$. We define the spanning tree
$t = \sigma_\La (\xi)$ in stages. At each stage, we
will connect each vertex in $B_i$ to some vertex 
in $B_{i-1}$, which automatically ensures that there
are no loops. In order to be able to start, 
we define $B_0 = \{ \varpi \}$. Since 
$\cup_{i \ge 0} B_i = \La \cup \{ \varpi \} = V(G_\La)$, the 
construction ensures that we get a spanning tree.

First we connect each $x \in B_1 = H(\xi)$
to $\varpi$, that is we put the edge 
$\diss(x)$ into $t$. Suppose that $i \ge 2$, and 
each vertex in $\cup_{1 \le j < i} B_j$ has been 
connected to an earlier vertex. Let $x \in B_i$, 
and define
\eqnspl{e:data}
{ n_{x,\La} 
  &:= \text{number of ordinary edges between $x$ 
      and $\cup_{0 \le j < i} B_j$}; \\
  P_{x,\La} 
  &:= \{ y \in \Z^d : \text{$|y| = 1$ and $\ord(x,x+y)$ 
      is an edge between $x$ and $B_{i-1}$} \}; \\
  K_{x,\La}
  &:= \{ 2d - n_x, \dots, 2d - n_x + | P_{x,\La} | - 1 \}. }
Suppose that for every set $\es \not= P \subset \{ y \in \Z^d : |y| = 1 \}$
and every set $K \subset \{ 0, 1, \dots, 2d-1 \}$ of the
form $\{ k, k+1, \dots, k + |P| - 1 \}$ an arbitrary 
bijection $\alpha_{P,K} : P \to K$ is fixed. 
The fact that $x$ burns at time $i$ means that we
must have $\xi_x \in K_{x,\La}$: $x$ has $2d - n_x$ 
unburnt neighbours at time $i$, so in order for it to 
burn at time $i$, we must have $\xi_x \ge 2d - n_x$. 
On the other hand, since it did not burn before
time $i$, we must have $\xi_x \le 2d - n_x + |P_{x,\La}| - 1$.
We select the edge $\ord(x,x+\alpha_{P_{x,\La},K_{x,\La}}^{-1}(\xi_x))$
between $x$ and $B_{i-1}$ to be placed in $t$.
This completes the definition of $t = \sigma_\La(\xi)$.

\begin{lemma}[Majumdar-Dhar \cite{MD92}, \cite{JRS10}]\ \\
(i) The map $\sigma_\La$ is a bijection between $\caA_\La$ 
and $\caT_\La$.\\
(ii) The restriction of $\sigma_\La$ to 
$\caA_\La \cap \Om^{\discr}_\La$
is a bijection between this set and $\caT^{(0)}_\La$.
\end{lemma}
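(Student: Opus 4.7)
The plan is to construct an explicit inverse $\tau_\La : \caT_\La \to \caA_\La$, show it is well-defined (i.e.\ its output really is allowed), and then verify $\sigma_\La \circ \tau_\La = \mathrm{id}_{\caT_\La}$ and $\tau_\La \circ \sigma_\La = \mathrm{id}_{\caA_\La}$. Part (ii) will then follow immediately by tracking when the dissipative edges appear.

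For the inverse, given $t \in \caT_\La$ I would first recover the burning partition from $t$ alone. Set $B_0 := \{\varpi\}$. Because the construction of $\sigma_\La$ attaches a dissipative edge at $x$ iff $x \in B_1$, set $B_1 := \{x \in \La : \diss(x) \in t\}$. For $i \ge 2$ define inductively $B_i$ as the set of $x \in \La \setminus \cup_{j<i} B_j$ whose parent in $t$ (the first vertex on the unique path from $x$ to $\varpi$) lies in $B_{i-1}$; since $t$ spans $G_\La$, the $B_i$ partition $\La \cup \{\varpi\}$ after finitely many steps. Then define $\tau_\La(t)=\xi$ by $\xi_x := 2d$ for $x \in B_1$, and for $i \ge 2$, $x \in B_i$, by reading off $n_{x,\La}$, $P_{x,\La}$, $K_{x,\La}$ from \eqref{e:data} (using the already constructed $B_j$'s) and setting $\xi_x := \alpha_{P_{x,\La},K_{x,\La}}(w)$, where $w$ is the unit vector such that $x + w$ is the parent of $x$ in $t$; by construction $w \in P_{x,\La}$, so this makes sense.

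The main step, and the main obstacle, is to show that this $\xi$ is actually in $\caA_\La$ and that the burning test applied to $\xi$ reproduces the same partition $(B_i)$. I would argue by induction on $i$. For $x \in B_i$ with $i\ge 2$, assuming $B_1,\dots,B_{i-1}$ have been correctly identified as the first $i-1$ layers of the burning test, the number of unburnt neighbors of $x$ at time $i-1$ equals exactly $2d - n_{x,\La}$, and $K_{x,\La} = \{2d-n_{x,\La},\dots,2d-n_{x,\La}+|P_{x,\La}|-1\}\subset\{0,\dots,2d-1\}$ is the consecutive interval sitting just above the ``burns at step $i$'' threshold; hence $\xi_x \in K_{x,\La}$ forces $x$ to burn at step $i$ and, because $n_{x,\La}$ grows with $i$, not earlier. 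The delicate bookkeeping to get right is that ordinary edges from $x \in \La$ to $\Z^d\setminus\La$ are identified with ordinary edges to $\varpi\in B_0$, so they contribute to $n_{x,\La}$ but not to $P_{x,\La}$ for $i\ge 2$; once that is pinned down the induction is direct, and the burning test terminates, so $\xi$ is allowed by the discussion in Section~\ref{ssec:burning}.

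Once $\tau_\La$ is well-defined, both equalities $\sigma_\La \circ \tau_\La = \mathrm{id}_{\caT_\La}$ and $\tau_\La \circ \sigma_\La = \mathrm{id}_{\caA_\La}$ follow essentially by inspection, because the two maps are built from the same partition $(B_i)$ and the same fixed bijections $\alpha_{P,K}$, which are invertible by hypothesis. Part (ii) is then immediate: $\xi \in \caA_\La \cap \Om^{\discr}_\La$ iff $\xi_x \le 2d - 1$ for every $x$, iff $B_1 = \es$, iff $\sigma_\La(\xi)$ contains no dissipative edges, iff $\sigma_\La(\xi)\in \caT^{(0)}_\La$.
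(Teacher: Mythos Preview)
Your proposal is correct and takes essentially the same approach as the paper: construct the explicit inverse from the tree-derived layers $(B_i)$, verify by induction that the burning test on the output reproduces those same layers (hence the output is allowed), and deduce part (ii) from the equivalence $B_1 = \varnothing \Leftrightarrow$ no dissipative edges. The only minor structural difference is that the paper first proves injectivity of $\sigma_\La$ directly by a case analysis and then checks only the composition $\sigma_\La \circ \phi_\La = \mathrm{id}$, whereas you skip the separate injectivity argument and verify both compositions; the core inductive content is identical.
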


\begin{proof}
(i) We show that $\sigma_\La$ is injective. Let 
$\xi^1, \xi^2 \in \caA_\La$, $\xi^1 \not= \xi^2$, and
let $t^1 := \sigma_\La(\xi^1)$, $t^2 := \sigma_\La(\xi^2)$.
If $H(\xi^1) = B_1(\xi^1) \not= B_1(\xi^2) = H(\xi^2)$,
then $t^1$ and $t^2$ differ in at least 
one dissipative edge. Hence we may assume that 
$B_1(\xi^1) = B_1(\xi^2)$, and this implies that 
$\xi^1 = \xi^2$ on $B_1(\xi^1) = B_1(\xi^2)$.
Let $i \ge 2$ be the smallest index such that either 
$B_i(\xi^1) \not= B_i(\xi^2)$ or there exists 
$x \in B_i(\xi^1) = B_i(\xi^2)$ with $\xi^1_x \not= \xi^2_x$.
If such index did not exist, we would get by induction
on $i$ that $\xi^1 = \xi^2$ on 
$\cup_{i \ge 1} B_i(\xi^1) = \cup_{i \ge 1} B_i(\xi^2) = \La$,
a contradiction. By the choice of $i$, we have
\eqn{e:equality}
{ \text{$B_j(\xi^1) = B_j(\xi^2)$ for $1 \le j \le i-1$.} }
If $B_i(\xi^1) \not= B_i(\xi^2)$, then pick a vertex
$x$ in the symmetric difference. Then by the construction 
of $\sigma_\La$, in one of $t^1$ and $t^2$ there is an edge
from $x$ to $B_{i-1}(\xi^1) = B_{i-1}(\xi^2)$ 
and there is no such edge in the other, so 
$t^1 \not= t^2$. Suppose therefore
that $B_i(\xi^1) = B_i(\xi^2)$, but there exists
$x \in B_i(\xi^1) = B_i(\xi^2)$ such that $\xi^1_x \not= \xi^2_x$.
By the equality \eqref{e:equality}, we have
$n_{x,\La}(\xi^1) = n_{x,\La}(\xi^2)$, 
$P_{x,\La}(\xi^1) = P_{x,\La}(\xi^2)$, and hence also
$K_{x,\La}(\xi^1) = K_{x,\La}(\xi^2)$. 
However, since $\xi^1_x \not= \xi^2_x$ we have
$\alpha_{P_{x,\La},K_{x,\La}}^{-1}(\xi^1_x) \not= 
\alpha_{P_{x,\La},K_{x,\La}}^{-1}(\xi^2_x)$, and
therefore the edge between $x$ and $B_{i-1}$ 
is different in $t^1$ and $t^2$.
This completes the proof of injectivity.

We now show that $\sigma_\La$ is surjective. In the course
of doing so, we find the inverse map 
$\sigma_\La^{-1} =: \phi_\La : \caT_\La \to \caA_\La$.
First we note that for any $\xi \in \caA_\La$, 
the sets $B_0, B_1, \dots$ and the data in \eqref{e:data} 
can be easily expressed in terms of $t = \sigma_\La(\xi)$ 
as well. Namely, let 
$d_t(\cdot,\cdot)$ denote graph distance in the 
tree $t$. Then due to the construction of $t$, we have
\eqnspl{e:B's}
{ B_0 
  &= \{ \varpi \}; \\
  B_1
  &= \{ x \in \La : \diss(x) \in t \}; \\
  B_i 
  &= \{ x \in \La : d_t(B_0 \cup B_1, x) = i-1 \}, 
    \qquad i \ge 2. }
Since this expresses $B_0, B_1, \dots$ in terms 
of $t$, the formulas \eqref{e:data} show that 
$n_{x,\La}$, $P_{x,\La}$ and $K_{x,\La}$ are also 
expressed in terms of $t$.
Also, by the definition of $\sigma_\La$, if the unique
edge of $t$ in $P_{x,\La}$ is $\ord(x,x+y)$, then
we have $\xi_x = \alpha_{P_{x,\La}, K_{x,\La}} (y)$.

The above makes it clear what the inverse $\phi_\La = \sigma_\La^{-1}$ 
has to be. Suppose that $t \in \caT_\La$ is given.
We use \eqref{e:B's} to \emph{define} the $B_i$'s and 
for $x \in B_i$, $i \ge 2$, we use \eqref{e:data} 
as the definition of $n_{x,\La}$, $P_{x,\La}$ and $K_{x,\La}$.
For $x \in B_1$, we set $\xi_x = 2d$. 
For $x \in B_i$, $i \ge 2$ let $y_{x,\La} \in \Z^d$ be
such that $\ord(x,x+y_{x,\La})$ is 
the unique edge of $t$ in $P_{x,\La}$, and we set 
$\xi_x = \alpha_{P_{x,\La}, K_{x,\La}} (y_{x,\La})$.
We define $\phi_\La(t) := \xi$. It is clear that 
if $\xi \in \caA_\La$, then $\sigma_\La(\phi_\La(t)) = t$.
What is left to show is that we always have 
$\xi \in \caA_\La$.

We prove that for every $t \in \caT_\La$ we have
$\xi = \phi_\La(t) \in \caA_\La$, 
by applying the Burning Test to $\xi$.
It is immediate that in the first step 
exactly $B_1$ burns as $B_1 = H(\xi)$ by construction.
Suppose now inductively that $i \ge 2$ and we already 
know that at time $1 \le j \le i-1$ exactly 
$B_j$ burns. Let $x \in B_i$. Then due to the 
inductive hypothesis and the definition of
$n_{x,\La}$, $x$ has precisely $2d - n_{x,\La}$ unburnt
neighbours at time $i-1$. Since $\xi_x \in K_{x,\La}$ 
by the definition of $\xi$, we have $\xi_x \ge 2d - n_{x,\La}$
and hence $x$ burns at time $i$. Let now
$x \in B_j$ with $j \ge i+1$. Then by the 
induction hypothesis, $B_{j-1}, B_j, \dots$ are unburnt 
at time $i-1$, and hence the number of unburnt
neighbours of $x$ at time $i-1$ is at least 
$2d - n_{x,\La} + | P_{x,\La} |$. Hence, since
$\xi_x \in K_{x,\La}$, we have 
$\xi_x < 2d - n_x + |P_{x,\La}|$, and therefore
$x$ does not burn at time $i$. This shows that 
at time $i$ precisely the set $B_i$ burns, and
completes the induction. Therefore $\xi$ is allowed,
and we have shown that $\sigma_\La$ is a bijection 
between $\caA_\La$ and $\caT_\La$.

(ii) This second statement of the Lemma is now
clear from the construction.
\end{proof}

For $\gamma \ge 0$, let $\mu^{(\gamma)}_\La$ be the 
probability measure on $\caT_\La$ such that
$\mu^{(\gamma)}(t) = c \gamma^{H(t)}$, where 
$H(t) = \text{number of dissipative edges in $t$}$.
In particular, $\mu_\La = \mu^{(0)}_\La$ is the uniform 
spanning tree measure on $G^{(0)}_\La$.
It is clear from \eqref{e:weights} that for all $\gamma \ge 0$
the measure $\nu^{(\gamma)}_\La$ is precisely the image of 
$\mu^{(\gamma)}_\La$ under $\phi_\La = \sigma_\La^{-1}$.

\medbreak

In Sections \ref{ssec:limits0}, \ref{ssec:limitsgamma} below,
we describe the limits of the measures 
$\mu^{(\gamma)}_\La$, $\nu^{(\gamma)}_\La$, $m^{(\gamma)}_\La$,
$\gamma \ge 0$, as $\La \uparrow \Z^d$. 

\subsection{Infinite volume limits when $\gamma = 0$}
\label{ssec:limits0}

When $\gamma = 0$, it is well known \cite{Pem} 
that the weak limit 
$\mu = \mu^{(0)} = \lim_{\La \uparrow \Z^d} \mu^{(0)}_\La$ 
exists, it is called the Wired Uniform Spanning Forest
measure \cite{BLPS01}. The paper \cite{AJ04} proves that the
weak limit $\nu = \nu^{(0)} = \lim_{\La \uparrow \Z^d} \nu^{(0)}_\La$
also exists, for any $d \ge 2$. In the cases $d = 2, 3$ relevant
for this paper, it follows from the
proof in \cite{AJ04} that the coding in terms
of spanning trees remains true in $\Z^d$, in the
following sense. Recall that the graph $G^{(0)}$
is the $\Z^d$ lattice, and 
let $\tOm^{(0)} := \{0, 1\}^{E(G^{(0)})}$.
Also recall that 
$\Om^\discr = \{0, 1, \dots, 2d-1 \}^{\Z^d}$.
There exists a measurable map 
$\phi^{(0)}: \tOm^{(0)} \to \Om^\discr$ defined
$\mu^{(0)}$-a.e.~such that 
the image of $\mu^{(0)}$ under $\phi$ is $\nu^{(0)}$. 

We now state the form of $\phi^{(0)}$ in the relevant 
cases $d = 2, 3$. This is similar to the
form of $\phi_\La$ in Section \ref{ssec:spanning}. 
First note that $\mu^{(0)}$ concentrates 
on configurations in $\tOm^{(0)}$ that are 
trees with one end \cite[Theorem 4.3]{Pem}.
(We say that a tree has one end, if
any two infinite self-avoiding paths in the tree 
have infinitely many vertices in common.)
Fix $\omega \in \tOm^{(0)}$ that is a tree with
one end, and write $d_\omega(\cdot, \cdot)$ for 
graph distance in the tree $\omega$. For any 
$x \in \Z^d$, let $\pi^{(0)}_x$ denote
the unique self-avoiding path in $\omega$ from 
$x$ to infinity. There exists a unique vertex 
$V_x$ that is on all the paths $\{ \pi^{(0)}_{x+y} \}_{|y| \le 1}$
and is furthest from infinity (along each path).
We define
\eqnspl{e:data3}
{ n^{(0)}_{x} 
  &= \left| \{ y \in \Z^d : |y| = 1,\, 
     d_\omega(x+y,V_x) < d_\omega(x,V_x) \} \right| \\
  P^{(0)}_{x} 
  &:= \left\{ y \in \Z^d : |y| = 1,\, 
     d_\omega(x+y,V_x) = d_\omega(x,V_x)-1 \right\} \\
  K^{(0)}_{x}
  &:= \{ 2d - n^{(0)}_x, \dots, 2d - n^{(0)}_x + | P^{(0)}_{x} | - 1 \} \\
  y^{(0)}_x
  &:= \text{the vertex such that $(x,x+y^{(0)}_x)$ is the 
      first edge of $\pi^{(0)}_x$}. }
Set $\xi_x := \alpha_{P^{(0)}_x,K^{(0)}_x}(y^{(0)}_x)$, $x \in \Z^d$.
It follows from the proof of \cite[Theorem 1]{AJ04}
and the form of $\phi_\La$ in Section \ref{ssec:spanning}
that $\xi =: \phi^{(0)}(\omega)$ is the claimed map. In other
words, $\{ \xi_x \}_{x \in \Z^d}$ has distribution 
$\nu^{(0)}$, if $\omega$ has distribution $\mu^{(0)}$.

Taking into account the uniformity 
property \eqref{e:uniformity}, it is fairly straightforward
to show \cite[Lemma 4]{JRS10} that we also have the weak limit 
$m^{(0)} := \lim_{\La \uparrow \Z^d} m^{(0)}_\La$. Also,
the uniformity property is preserved in this limit.

\subsection{Infinite volume limits when $\gamma > 0$}
\label{ssec:limitsgamma}

We now describe what happens for $\gamma > 0$. Due to 
the monotonicity properties of weighted spanning tree
measures \cite[Sections 4,5]{BLPS01} it follows that 
the weak limit 
$\mu^{(\gamma)} := \lim_{\La \uparrow \Z^d} \mu^{(\gamma)}_\La$
exists for any $\gamma > 0$. 

We will be interested in sampling from $\mu^{(\gamma)}$ via Wilson's 
algorithm \cite{Wilson}. Consider the network random
walk on the graph $G$, where ordinary edges have 
weight $1$ and dissipative edges have weight $\gamma$.
This is the Markov chain with state space 
$\Z^d \cup \{ \varpi \}$ and transition probabilities 
\eqnsplst
{ p_{x,y} 
  &= \frac{1}{2d + \gamma}, 
     \qquad \text{if $x, y \in \Z^d$, $|x-y| = 1$;} \\
  p_{x,\varpi}
  &= \frac{\gamma}{2d + \gamma},
     \qquad \text{if $x \in \Z^d$.} }
The Markov chain is stopped at the first time $\varpi$ is hit.
The analogous network random walk can also be defined on the graph 
$G_\La$, where again, ordinary edges have weight $1$ and
dissipative edges have weight $\gamma$.

If $\rho = [\rho_0, \rho_1, \dots]$ is a finite path in 
$G_\La$ or $G$, the \emph{loop-erasure} $\LE(\rho)$ of $\rho$ 
is defined by chronologically erasing loops from the path 
$\rho$, as they are created.
That is, $\LE(\rho) = [\pi_0, \pi_1, \dots]$, where
$\pi_0 := \rho_0$, and for $i \ge 1$ we inductively define
\eqnsplst
{ s_i 
  &:= \max \{ n \ge 0 : \rho_n = \pi_{i-1} \} \\
  \pi_i
  &:= \rho_{s_i + 1}. }
Note that loop-erasure can also be defined for infinite
paths $\rho$ that visit any vertex only finitely often.
When $(S_n)_{n \ge 0}$ is the network random walk on $G_\La$
or $G$, the loop-erasure is called the
\emph{Loop-Erased Random Walk} (LERW) \cite{Lawler}.

Wilson's algorithm in the case of $G_\La$ can be 
stated as follows. Let $x_1, \dots, x_N$ be an enumeration 
of all vertices in $\La$. Let 
$(S^1_n)_{n \ge 0}, \dots, (S^N_n)_{n \ge 0}$ be independent
network random walks started at $x_1, \dots, x_N$, respectively.
We define a growing sequence of trees 
$\caF_0 \subset \caF_1 \subset \dots$ as follows.
Put $\caF_0 := \varpi$, and for $i \ge 1$ define
inductively
\eqnspl{e:trees}
{ T^i 
  &:= \inf \{ n \ge 0 : S^i(n) \in \caF_{i-1} \} \\
  \caF_i 
  &:= \caF_{i-1} \cup \LE(S^i[0,T^i]). }
Wilson's Theorem \cite{Wilson} implies that $\caF_N$ 
has distribution $\mu^{(\gamma)}_\La$, irrespective
of the chosen enumeration of the vertices.
The algorithm also applies to $G^{(0)}_\La$ (this 
can be obtained by setting $\gamma = 0$, so that 
dissipative egdes are never traversed), and
it produces a sample from $\mu^{(0)}_\La$.

Wilson's algorithm in the case of $G$ is similar. 
We start with an enumeration $x_1, x_2, \dots$ of all
vertices of $\Z^d$, and define the growing sequence
of trees $\caF_0 \subset \caF_1 \subset \dots$ as in
\eqref{e:trees}. Put $\caF = \cup_{i=0}^\infty \caF_i$.
By an argument similar to \cite[Theorem 5.1]{BLPS01}
it follows that $\caF$ is distributed as $\mu^{(\gamma)}$.
Note that due to transience, Wilson's algorithm
also makes sense on $G^{(0)}$ when $d = 3$, and again
by \cite[Theorem 5.1]{BLPS01} it produces a sample
from $\mu^{(0)}$.

We now describe the relationship between $\mu^{(\gamma)}$ 
and $\lim_{\La \uparrow \Z^d} \nu^{(\gamma)}_\La$. 
Let $\tOm := \{ 0, 1 \}^{E(G)}$. 
It follows from Wilson's algorithm that when 
$\gamma > 0$, for $\mu^{(\gamma)}$-a.e.~configuration,
for every $x \in \Z^d$ there is a finite path from 
$x$ to $\varpi$.
Fix an $\omega \in \tOm$ with this property, and let
$\pi_x$ denote the unique self-avoiding 
path in $\omega$ from $x$ to $\varpi$. 
We define
\eqnspl{e:data4}
{ n_{x} 
  &= \left| \{ y \in \Z^d : |y| = 1,\, 
     d_\omega(x+y,\varpi) < d_\omega(x,\varpi) \} \right| \\
  P_{x} 
  &:= \left\{ y \in \Z^d : |y| = 1,\, 
     d_\omega(x+y,\varpi) = d_\omega(x,\varpi)-1 \right\} \\
  K_{x}
  &:= \{ 2d - n_x, \dots, 2d - n_x + | P_{x} | - 1 \} \\
  y_x
  &:= \text{the vector such that $(x,x+y_x)$ is the 
      first edge of $\pi_x$}. }
Set $\xi_x := \alpha_{P_x,K_x}(y_x)$, $x \in \Z^d$, and
define the map 
$\phi : \tOm \to \Om^{\discr,*} = \{0, 1, \dots, 2d-1, 2d \}^{\Z^d}$
by $\xi =: \phi(\omega)$. 
The following lemma is a more explicit version
of \cite[Lemma 3]{JRS10}.

\begin{lemma}
\label{lem:convgamma}\ \\
(i) The weak limit $\nu^{(\gamma)} := \lim_{\La \uparrow \Z^d} \nu^{(\gamma)}_\La$
exists for all $\gamma > 0$.\\
(ii) The image of $\mu^{(\gamma)}$ under $\phi$ is $\nu^{(\gamma)}$
for all $\gamma > 0$.
\end{lemma}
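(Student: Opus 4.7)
\noindent\textbf{Proof proposal for Lemma \ref{lem:convgamma}.}
The natural approach is to \emph{define} $\nu^{(\gamma)}$ as the image of $\mu^{(\gamma)}$ under $\phi$, so that (ii) holds by construction, and then verify (i) by a coupling argument based on Wilson's algorithm. Since we already know from \cite[Sections 4,5]{BLPS01} that $\mu^{(\gamma)}_\La \Rightarrow \mu^{(\gamma)}$ as $\La \uparrow \Z^d$, and since $\nu^{(\gamma)}_\La$ is the pushforward of $\mu^{(\gamma)}_\La$ under $\phi_\La$, it suffices to show that for any cylinder event $E \subset \Om^{\discr,*}$ depending only on heights in $B(k)$, the map $\omega \mapsto (\phi(\omega)_x)_{x\in B(k)}$ is determined, with high probability, by a finite portion of the tree, and that this portion is the same under the finite and infinite volume couplings.

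The plan is first to observe that for each $x \in B(k)$ the quantities $(n_x, P_x, K_x, y_x)$ from \eqref{e:data4} are determined by the union of the tree paths $\pi_y$ from the vertices $y$ with $|y-x|\le 1$ to $\varpi$: knowing these paths tells us $d_\omega(y,\varpi)$ for every neighbour $y$ of $x$ and hence which neighbours are closer to $\varpi$. Under $\mu^{(\gamma)}$, running Wilson's algorithm from $B(k+1)$ with network random walks that have geometric killing rate $\gamma/(2d+\gamma)$ at every step, each random walk is killed in a geometrically distributed number of steps, and so its loop-erasure is a.s.~finite. Applying standard bounds (e.g.~the fact that by time $n$ the walker has with overwhelming probability been killed if $n \gg \gamma^{-1}$, combined with a crude diameter estimate of order $\sqrt{n}$ for the unkilled walk), one obtains that for every $\eps > 0$ there exists $k' = k'(\eps, k, \gamma) < \infty$ such that with $\mu^{(\gamma)}$-probability at least $1-\eps$ each of these finitely many tree paths is contained in $B(k')$.

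Now take any $\La \supset B(k')$ and couple the two samples by running Wilson's algorithm in $G$ and in $G_\La$ using the \emph{same} i.i.d.~random walk steps in both, with the identification that a step of the walk in $G$ that enters $\Z^d \setminus \La$ corresponds to a step in $G_\La$ that lands in $\varpi$. On the good event above, the walks started in $B(k+1)$ are killed by a dissipative edge inside $B(k')$ before they ever exit $\La$, so their loop-erasures coincide, and hence the portions of the two trees spanned by the paths from $B(k+1)$ to $\varpi$ agree as subgraphs of $G$. For $x \in B(k)$ whose tree path to $\varpi$ lies inside $\La$, one checks from \eqref{e:B's}--\eqref{e:data} that $(n_{x,\La}, P_{x,\La}, K_{x,\La}, y_{x,\La})$ coincides with $(n_x, P_x, K_x, y_x)$: the burning time $d_t(B_0\cup B_1, x)+1$ equals $d_\omega(x,\varpi)$ because any such path necessarily terminates in a dissipative edge, and the set of neighbours at the previous burning level is exactly the set of neighbours closer to $\varpi$. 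Consequently $\phi_\La(t)_x = \phi(\omega)_x$ for every $x \in B(k)$ on this event, yielding $|\nu^{(\gamma)}_\La(E) - \nu^{(\gamma)}(E)| \le 2\eps$, which proves (i).

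The main obstacle is the verification that on the coupled good event the \emph{directed} tree structure (which neighbour of $x$ is closer to $\varpi$) really does agree between $G_\La$ and $G$. This is where one has to be careful: a vertex near $\partial \La$ could be attached to $\varpi$ via an ordinary boundary edge, which would not put it in $B_1$ and could shift burning times by one. The fix is to pick $k'$ large enough that none of the relevant tree paths reach a boundary of $\La$, so that every such path terminates in a genuine dissipative edge $\diss(\cdot)$ and the burning time along the path agrees with the infinite volume graph distance to $\varpi$. Once this is secured, the argument above closes cleanly.
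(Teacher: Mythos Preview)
Your proposal is correct and follows essentially the same approach as the paper: both use Wilson's algorithm to couple $\mu^{(\gamma)}_\La$ and $\mu^{(\gamma)}$, observe that the paths $\{\pi_{x+y}\}_{|y|\le 1}$ converge in law, and then use that $\xi_x$ is a function of these paths. The paper is terser---it simply asserts that ``it is clear from the definitions of the maps $\phi_\La$ and $\phi$ that $\xi_{x,\La}$ and $\xi_x$ only depend on these paths''---whereas you carry out explicitly the verification that $(n_{x,\La},P_{x,\La},K_{x,\La},y_{x,\La})=(n_x,P_x,K_x,y_x)$ on the good event, including the boundary-edge subtlety. One small imprecision: your good event should be stated as the \emph{random walk trajectories} (not merely the loop-erased tree paths) staying inside $B(k')$, since that is what your diameter bound actually gives and what is needed for the coupling to force the loop-erasures to coincide; but your argument already uses this implicitly, so nothing is missing.
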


\begin{proof}
(i) Let $x \in \Z^d$, and let $x_1, \dots, x_{2d+1}$ 
be an enumeration of $\{ x + y \in \Z^d : |y| \le 1 \}$.
Using Wilson's algorithm, we get that for any $x \in \Z^d$,
the joint law of the paths $\{ \pi_{x+y,\La} \}_{|y| \le 1}$
under $\mu^{(\gamma)}_\La$ converges to the joint law of the paths 
$\{ \pi_{x+y} \}_{|y| \le 1}$ under $\mu^{(\gamma)}$, 
as $\La \uparrow \Z^d$. 
It is clear from the definitions of the maps $\phi_\La$ 
and $\phi$ that $\xi_{x,\La}$ and $\xi_x$ only depend on these
paths. Therefore, it follows that the law of
$\xi_{x,\La}$ under $\nu^{(\gamma)}_\La$ converges to 
the law of $\xi_x$ under $\mu^{(\gamma)} \circ \phi^{-1}$.

By essentially the same argument, we obtain that for any 
finite $B \subset \Z^d$, the joint law of $(\xi_{x,\La})_{x \in B}$
under $\nu^{(\gamma)}_\La$ converges to the joint law
of $(\xi_x)_{x \in B}$ under $\mu^{(\gamma)} \circ \phi^{-1}$.
For this we only need to observe that the paths 
$\{ \pi_{x+y,\La} \}_{x \in B,\, |y| \le 1}$ determine 
$(\xi_{x,\La})_{x \in B}$. This proves the weak convergence
statement.

(ii) This follows directly from the form of
$\phi_\La$ in Section \ref{ssec:spanning} and the
convergence $\pi_{x,\La} \Rightarrow \pi_{x}$.
\end{proof}

Lemma \ref{lem:convgamma} and the uniformity 
property \eqref{e:uniformity} implies the convergence of the
measures $m^{(\gamma)}_\La$. Analogously to the finite
$\La$ case define $\caX := \{ 0, 1, \dots \}^{\Z^d}$, and
let $\psi : \caX \to \Om^{\discr,*} = \{0, 1, \dots, 2d-1, 2d \}^{\Z^d}$ 
be defined by the same formulae as in \eqref{e:discretize}. 

\begin{corollary}
\label{cor:mlimit}\cite[Lemma 4]{JRS10} \\
(i) For any $\gamma \ge 0$ the weak limit 
$m^{(\gamma)} := \lim_{\La \uparrow \Z^d} m^{(\gamma)}_\La$
exists and the image of $m^{(\gamma)}$ under $\psi$
is $\nu^{(\gamma)}$.\\
(ii) The measure $m^{(\gamma)}$ still satisfies the
uniformity property \eqref{e:uniformity}
\end{corollary}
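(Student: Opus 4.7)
The plan is to define $m^{(\gamma)}$ by the formula that (ii) forces on it, and then deduce (i) from the convergence $\nu^{(\gamma)}_\La \Rightarrow \nu^{(\gamma)}$ established in Lemma \ref{lem:convgamma} (and, for $\gamma = 0$, from the coding construction recalled in Section \ref{ssec:limits0}). For each $h \in \{0,1,\dots,2d\}$ set $V(h) := [h,h+1)$ if $h \le 2d-1$, and $V(2d) := [2d,2d+\gamma)$. Define $m^{(\gamma)}$ on $\Om^{(\gamma)}$ to be the unique probability measure under which $\psi(\eta)$ has law $\nu^{(\gamma)}$ and, conditional on $\psi(\eta) = \xi$, the coordinates $(\eta_x)_{x \in \Z^d}$ are independent with $\eta_x \sim \Unif(V(\xi_x))$. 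This measure exists by Kolmogorov's extension theorem, since the finite-dimensional distributions it prescribes are manifestly consistent. Property (ii) and the fact that the image of $m^{(\gamma)}$ under $\psi$ is $\nu^{(\gamma)}$ both hold by construction.

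To establish the weak convergence in (i), note that $\Om^{(\gamma)}$ carries the product topology, so it suffices to check the convergence of $\int f\, dm^{(\gamma)}_\La$ for every bounded continuous cylinder function $f$. Fix a finite $B \subset \Z^d$ and $f$ depending only on $(\eta_x)_{x \in B}$. For $\La \supset B$, the finite-volume uniformity property \eqref{e:uniformity} together with Fubini gives
\eqnst{
\int f\, dm^{(\gamma)}_\La = \sum_{\zeta \in \Om^{\discr,*}_B} \nu^{(\gamma)}_\La\bigl(\{\xi : \xi_x = \zeta_x \text{ for all } x \in B\}\bigr) \cdot \frac{1}{|V(\zeta)|} \int_{V(\zeta)} f(\eta)\, d\eta,
}
where $V(\zeta) := \prod_{x \in B} V(\zeta_x)$; by construction, the same identity holds for $m^{(\gamma)}$ with $\nu^{(\gamma)}$ in place of $\nu^{(\gamma)}_\La$. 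The outer sum has only finitely many terms, and by Lemma \ref{lem:convgamma}(i) (or, for $\gamma = 0$, by \cite{AJ04} as recalled in Section \ref{ssec:limits0}) each marginal probability converges to its infinite-volume counterpart as $\La \uparrow \Z^d$. Passing to the limit term by term gives the desired convergence.

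No significant obstacle arises: the whole argument is a routine ``lifting'' from the discrete measure $\nu^{(\gamma)}$ to its continuous extension $m^{(\gamma)}$, and convergence at the continuous level reduces directly to the already-established convergence at the discrete level. The one point worth flagging is that for $\gamma = 0$ one must invoke the results of Section \ref{ssec:limits0} rather than Lemma \ref{lem:convgamma}, but this contributes nothing new since the needed marginal convergence is built into the coding map $\phi^{(0)}$ there.
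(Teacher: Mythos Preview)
Your proposal is correct and follows exactly the approach the paper indicates: the paper does not give a detailed proof here, but simply states (just before the corollary) that the result follows from Lemma~\ref{lem:convgamma} together with the uniformity property~\eqref{e:uniformity}, referring to \cite[Lemma~4]{JRS10} for details. Your argument---constructing $m^{(\gamma)}$ from $\nu^{(\gamma)}$ via the conditional uniform laws, and then reducing weak convergence of cylinder integrals to the already-known convergence of the $\nu^{(\gamma)}_\La$-marginals---is precisely this lifting, made explicit.
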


Finally, it was shown in \cite[Proposition 4]{JRS10}
that as $\gamma \downarrow 0$, $\nu^{(\gamma)} \Rightarrow \nu^{(0)}$
and correspondingly $m^{(\gamma)} \Rightarrow m^{(0)}$.
The rest of the paper will give a quantitative 
version of this statement, in proving Theorem \ref{thm:main}.

\section{Strategy of the proof}
\label{sec:strategy}

Due to the uniformity property of Corollary \ref{cor:mlimit}(ii),
it is sufficient to prove Theorem \ref{thm:main}
for the measures $\nu^{(\gamma)}$ and $\nu^{(0)}$, in place
of $m^{(\gamma)}$ and $m^{(0)}$.

Let $E \subset \Om^\discr$ be a cylinder event depending
on heights in $B(k)$, and let 
$w_1, \dots, w_K$ be a list of all the vertices in 
$B(k) \cup \partial_\ext B(k)$, where 
$\partial_\ext B(k) = \{ y \in B(k)^c : 
\text{$\exists z \in B(k)$ $y \sim z$} \}$. We know that 
$\mu^{(0)}$-a.s.~there exists $V \in \Z^d$ common to all paths
$\pi^{(0)}_{w_i}$, $1 \le i \le K$. We select $V$ to be the 
earliest such vertex with respect to graph distance from $B(k)$. 

The key property of the correspondence $\phi^{(0)}$ given 
in Section \ref{ssec:limits0} is that the height configuration 
$( \xi_x )_{x \in B(k)} = ( \phi^{(0)}(\omega)_x )_{x \in B(k)}$
only depends on the portion of the paths $\pi^{(0)}_{w_i}$
up to the vertex $V$.

When $\gamma$ is small, with high $\mu^{(\gamma)}$-probability 
there will be a $V^{(\gamma)} \in \Z^d$ such that all the paths
$\pi^{(\gamma)}_{w_i}$ meet at $V^{(\gamma)}$ before they reach $\varpi$.
Our goal is to couple $\mu^{(\gamma)}$ and $\mu^{(0)}$ in such 
a way that with high probability $V = V^{(\gamma)}$ and 
$\pi^{(\gamma)}_{w_i} = \pi^{(0)}_{w_i}$ up to the vertex $V^{(\gamma)} = V$.

Consider first $d = 3$. A natural coupling is given by 
applying Wilson's algorithm with the same 
random walks for $\mu^{(\gamma)}$ and $\mu^{(0)}$.
As mentioned in Section \ref{ssec:limitsgamma}, 
the algorithm generalizes to the infinite settings of 
$\mu^{(0)}$ and $\mu^{(\gamma)}$; see \cite[Theorem 5.1]{BLPS01}
We start simple random walks at $w_1, \dots, w_K$. 
Each random walk receives an independent geometric time 
of parameter $\lambda = \gamma/(2d + \gamma)$. Ignoring the geometric
times, the algorithm realizes a sample from $\mu^{(0)}$.
When the random walks are killed at their respective 
geometric times, the algorithm realizes a sample from
$\mu^{(\gamma)}$, with killing corresponding to a jump
to $\varpi$. Let us call this type of coupling of
$\mu^{(0)}$ and $\mu^{(\gamma)}$ \emph{a standard coupling}.

It will be convenient to assume the following particular 
type of enumeration of vertices. Let $z_1, \dots, z_N$
be a list of all vertices in $\partial_\ext B(k)$, and let
$z_{N+1}, \dots, z_{N+M}$ be a list of all vertices of $B(k)$.
In addition we assume that the sequence $z_1, \dots, z_N$ 
has the property that for each $2 \le j \le N$
there exists $1 \le i(j) < j$ such that 
$z_{i(j)} \sim z_j$. We denote the spanning tree paths 
constructed in the coupling by $(\pi^{(0)}_{z_j})_{1 \le j \le N+M}$ 
and $(\pi^{(\gamma)}_{z_j})_{1 \le j \le N+M}$, respectively.
We will write $\bB(x,r)$ for the intersection with $\Z^d$
of the Euclidean ball of radius $r$ centred at $x$, and
$\bB(r)$ for $\bB(0,r)$.
The coupling is successful on the event when the following
conditions hold for some $m \ge 2k$:
\begin{itemize}
\item[(i)] for all $2 \le j \le N$, $\pi^{(0)}_{z_j}$ intersects 
$\pi^{(0)}_{z_{i(j)}}$ before leaving $\bB(m)$;
\item[(ii)] for all $2 \le j \le N$, $\pi^{(\gamma)}_{z_j}$
agrees with $\pi^{(0)}_{z_j}$ up to the first intersection of 
$\pi^{(0)}_{z_j}$ with $\pi^{(0)}_{z_{i(j)}}$;
\item[(iii)] for all $2 \le j \le N$, $\pi^{(\gamma)}_{z_{i(j)}}$ 
agrees with $\pi^{(0)}_{z_{i(j)}}$ up to the last exit of 
$\pi^{(0)}_{z_{i(j)}}$ from $\bB(m)$;
\item[(iv)] for all $N+1 \le j' \le N+M$, $\pi^{(\gamma)}_{z_{j'}}$
agrees with $\pi^{(0)}_{z_{j'}}$ up to the first intersection of
$\pi^{(0)}_{z_{j'}}$ with $\cup_{1 \le j \le N} \pi^{(0)}_{z_j}$.
\end{itemize}
Note that when (i)--(iii) hold, $\cup_{1 \le j \le N} \pi^{(0)}_{z_j}$
separates $B(k)$ from $\infty$, and hence condition
(iv) makes sense. 

We start by reducing the problem of verifying 
conditions (i)--(iii) to considering just two paths.
Let $(S(n))_{n \ge 0}$ and $(S^1(n))_{n \ge 0}$ be 
independent simple random walks started at the origin
and a neighbour of the origin, respectively, and let
$T$ and $T^1$ be independent $\mathrm{Geom}(\lambda)$
and random variables, independent of the walks. 
Sometimes it will be convenient to allow $T^1$ to be
$\mathrm{Geom}(\lambda_1)$ for some $\lambda_1$. We will
write $\Pr_{\lambda,\lambda_1}(\cdot)$ for the path space 
measure of the above coupling. Let
\eqnsplst
{ \xi^1
  &= \inf \{ j \ge 0 : S^1(j) \in \LE(S[0,\infty)) \} \\
  \xi^{1,\lambda}
  &= \inf \{ j \ge 0 : S^1(j) \in \LE(S[0,T)) \}. }

\begin{lemma}
\label{lem:2paths}
For any fixed $2 \le j \le N$, the joint distribution of 
\eqnst
{ (\pi^{(0)}_{z_{i(j)}}, \pi^{(\gamma)}_{z_{i(j)}}, 
     \pi^{(0)}_{z_j}, \pi^{(\gamma)}_{z_j}) }
is the same as the joint distribution of 
\eqnst
{ (\LE(S[0,\infty)), \LE(S[0,T]), 
     \LE(S^1[0,\xi^1]), \LE(S^1[0,\xi^{1,\lambda} \wedge T^1])), }
up to a shift by $z_{i(j)}$, and a rotation.
\end{lemma}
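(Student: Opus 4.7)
The plan is to sample $\omega^{(0)} \sim \mu^{(0)}$ and $\omega^{(\gamma)} \sim \mu^{(\gamma)}$ by Wilson's algorithm in the standard coupling, but using an enumeration of the vertices that processes $z_{i(j)}$ first and $z_j$ second (the remaining vertices following in any convenient order). By Wilson's theorem \cite{Wilson} and its extension to transient infinite graphs \cite[Theorem 5.1]{BLPS01}, each of the two sampled trees has the correct marginal distribution, and the four paths in the statement are intrinsic functions of the trees. A translation by $-z_{i(j)}$ places $z_{i(j)}$ at the origin and sends $z_j$ to a unit vector; a further $\pi/2$-rotation of $\Z^d$ (which preserves SRW and LERW in distribution, as well as the $\Geom(\lambda)$ killing) maps $z_j - z_{i(j)}$ to the specific neighbour of the origin at which $S^1$ starts. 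The two independent walks and kill times used in the first two Wilson steps can then be identified with $(S,T)$ and $(S^1, T^1)$.

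\paragraph{Reading off the four paths.} At the first Wilson step, the network random walk from $z_{i(j)}$ is, in the $\mu^{(\gamma)}$ case, the SRW $S$ run for $T$ steps and then jumping to $\varpi$; loop-erasing yields $\pi^{(\gamma)}_{z_{i(j)}} = \LE(S[0,T])$ together with the terminal dissipative edge. Using the same $S$ without killing and invoking transience in $d=3$ gives $\pi^{(0)}_{z_{i(j)}} = \LE(S[0,\infty))$. At the second Wilson step, the walk $S^1$ from $z_j$ is, in the $\mu^{(0)}$ case, stopped at $\xi^1$ (the first hit of $\LE(S[0,\infty))$), producing the branch $\LE(S^1[0,\xi^1])$; this branch together with the tail of $\LE(S[0,\infty))$ from the meeting point reconstructs $\pi^{(0)}_{z_j}$. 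In the $\mu^{(\gamma)}$ case, the walk from $z_j$ is stopped at $\min(\xi^{1,\lambda}, T^1 + 1)$ (either first hitting $\LE(S[0,T])$ or being killed), producing the branch $\LE(S^1[0,\xi^{1,\lambda} \wedge T^1])$; on the event $\xi^{1,\lambda} \le T^1$ this is followed by the tail of $\LE(S[0,T])$ to $\varpi$, and otherwise by a direct jump to $\varpi$, in either case reconstructing $\pi^{(\gamma)}_{z_j}$.

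\paragraph{The main obstacle.} The most delicate point is justifying that sampling via the re-enumerated Wilson's algorithm, rather than with the original enumeration $(z_1, \ldots, z_{N+M})$, produces the correct joint distribution of the four paths. Although the joint law of $(\omega^{(0)}, \omega^{(\gamma)})$ under the standard coupling is enumeration-dependent in general, for these particular paths the joint distribution can be read off after the first two Wilson steps: in the re-enumerated algorithm the walks $S$ and $S^1$ and the kill times $T, T^1$ alone determine the four paths, and the subsequent steps only grow the remainder of each tree from independent walks without altering the $\pi$'s. The argument is in the same spirit as Wilson's original cycle-popping proof that the output distribution is enumeration-independent, applied simultaneously to the two halves of the coupling. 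Once this reordering is in place, the identification of distributions is simply the bookkeeping carried out above.
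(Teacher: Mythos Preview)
Your proposal is correct and follows essentially the same route as the paper: run Wilson's algorithm with $z_{i(j)}$ processed first and $z_j$ second, then identify the walks and kill times used in these two steps with $(S,T)$ and $(S^1,T^1)$. The only difference is in how the ``main obstacle'' is dispatched. The paper realizes the standard coupling via cycle popping from a \emph{single} family of i.i.d.\ arrow stacks, with each arrow independently carrying a red marker with probability $\lambda$; ignoring markers one pops out $\omega^{(0)}$, and redirecting marked arrows to $\varpi$ one pops out $\omega^{(\gamma)}$. In that representation both trees are deterministic functions of the same stacks-and-markers, so the joint law of $(\omega^{(0)},\omega^{(\gamma)})$---and hence of the four paths---is automatically independent of the enumeration, and your concern that the coupling ``is enumeration-dependent in general'' evaporates rather than needing a separate argument ``in the same spirit'' as Wilson's proof. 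Once you adopt the stacks-and-markers viewpoint, your ``Reading off the four paths'' paragraph is exactly the computation the paper performs.
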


\begin{proof}
It will be useful to realize the coupling via cycle popping
from stacks of arrows, as in \cite{Wilson}.
For each $x \in \Z^3$, consider an infinite i.i.d.~stack of 
arrows pointing to random neighbours of $x$, with the stacks
also being independent. In addition, attach to each arrow, 
independently, a red marker with probability 
$\lambda$. If we ignore the markers, then we have 
the usual cycle popping in $\Z^3$. Now suppose that 
each arrow that has a marker is replaced by an arrow
pointing to $\varpi$. Then cycle popping realizes 
the Wilson algorithm for $\mu^{(\gamma)}$. 

Consider cycle popping starting at $z_{i(j)}$, that is,
follow the arrows, popping each cycle found. This uncovers
the path $\pi^{(0)}_{z_{i(j)}}$, and if the red markers are considered,
the path $\pi^{(\gamma)}_{z_{i(j)}}$. The paths constructed have
the joint distribution of $(\LE(S[0,\infty)), \LE(S[0,T]))$,
appropriately shifted. Now continue with cycle popping 
starting from $z_j$, which uncovers the paths $\pi^{(0)}_{z_j}$ and
$\pi^{(\gamma)}_{z_j}$. Interpreting cycle popping as a random
walk, we see that the conditional distribution of the new paths,
given the paths already constructed is the same as the 
conditional distribution of 
$(\LE(S^1[0,\xi^1]), \LE(S^1[0,\xi^1 \wedge T]))$, given
$(\LE(S[0,\infty)), \LE(S[0,T^1]))$ (appropriately shifted).
\end{proof}

Let us write 
\eqnsplst
{ \tau_m
  &= \inf \{ j \ge 0 : S(j) \not\in \bB(m) \} \\
  \tau^1_m
  &= \inf \{ j \ge 0 : S^1(j) \not\in \bB(m) \}. }
Consider the event that the following occur:
\begin{itemize}
\item[(i')] $\tau^1_m < \xi^1$, that is, $S^1$ hits 
$\LE(S[0,\infty))$ before exiting the ball $\bB(m)$;
\item[(ii')] $T^1 \ge \tau^1_m$, that is, $S^1$ is not
killed before exiting $\bB(m)$;
\item[(iii')] $\LE(S[0,T])$ agrees with $\LE(S[0,\infty))$
up to the last exit of $\LE(S[0,\infty))$ from $\bB(m)$.
\end{itemize}

Then we have the following corollary to Lemma \ref{lem:2paths}.

\begin{corollary}
\label{cor:reduce}
The probability that (i)--(iii) do not all occur is at most 
$N-1$ times the probabiliity that (i')--(iii') do not all
occur.
\end{corollary}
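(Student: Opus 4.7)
The bound is a union bound combined with the distributional identity supplied by Lemma \ref{lem:2paths}. Each of (i), (ii), (iii) is of the form ``for every $2 \le j \le N$, property $P_j$ holds''; denote by $(\text{i})_j, (\text{ii})_j, (\text{iii})_j$ the corresponding single-index events. The failure event decomposes as
\eqnst{
  \{\text{(i)--(iii) fail}\} \;=\; \bigcup_{j=2}^N \bigl\{(\text{i})_j \wedge (\text{ii})_j \wedge (\text{iii})_j \text{ fails}\bigr\},
}
so a union bound gives
\eqnst{
  \Pr\bigl(\text{(i)--(iii) fail}\bigr) \;\le\; \sum_{j=2}^N \Pr\bigl((\text{i})_j \wedge (\text{ii})_j \wedge (\text{iii})_j \text{ fails}\bigr).
}

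For each fixed $j$, Lemma \ref{lem:2paths} identifies the joint distribution of $(\pi^{(0)}_{z_{i(j)}}, \pi^{(\gamma)}_{z_{i(j)}}, \pi^{(0)}_{z_j}, \pi^{(\gamma)}_{z_j})$ (modulo a shift by $z_{i(j)}$ and a lattice rotation) with that of the two-walk coupling $(\LE(S[0,\infty)), \LE(S[0,T]), \LE(S^1[0,\xi^1]), \LE(S^1[0,\xi^{1,\lambda} \wedge T^1]))$. Under this natural translation, the single-index conditions become statements about these four LERWs. The plan is to show that the two-walk event (i') $\wedge$ (ii') $\wedge$ (iii') implies the translated versions of $(\text{i})_j, (\text{ii})_j, (\text{iii})_j$. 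Combined with the distributional identity, taking the contrapositive then bounds each summand above by $\Pr\bigl(\text{(i')--(iii') fail}\bigr)$, yielding the claimed factor of $N-1$.

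To verify the implication, treat the three conditions in turn. Under the correspondence $\pi^{(0)}_{z_{i(j)}} \leftrightarrow \LE(S[0,\infty))$ and $\pi^{(\gamma)}_{z_{i(j)}} \leftrightarrow \LE(S[0,T])$, the translated $(\text{iii})_j$ is precisely (iii'). For translated $(\text{i})_j$, (i') places $S^1[0,\xi^1]$ in $\bB(m)$, so $\LE(S^1[0,\xi^1]) \subset \bB(m)$ and meets $\LE(S[0,\infty))$ at its endpoint $S^1(\xi^1)$. For translated $(\text{ii})_j$, by (i') and (ii') we have $\xi^1 \le \tau^1_m \le T^1$, so $S^1$ is not killed before hitting the unkilled LERW; (iii') guarantees that the initial portion of $\LE(S[0,T])$--which contains all of $\LE(S[0,\infty)) \cap \bB(m)$--agrees with the corresponding portion of $\LE(S[0,\infty))$, so $S^1(\xi^1) \in \LE(S[0,T])$ and the killed and unkilled loop-erasures of $S^1$ coincide, i.e.\ $\xi^{1,\lambda} = \xi^1$.

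The main obstacle is the last point: ruling out that $S^1$, while confined to $\bB(m)$, accidentally hits $\LE(S[0,T])$ at a vertex lying outside the portion guaranteed to agree with $\LE(S[0,\infty))$. This is what (iii') is designed to prevent, by forcing every vertex of $\LE(S[0,T])$ that $S^1$ can possibly reach before time $\xi^1$ to also lie on $\LE(S[0,\infty))$, so that any prior hit would contradict the definition of $\xi^1$. Once this is in place, the remainder is bookkeeping, and summing over $j = 2, \dots, N$ produces the desired bound.
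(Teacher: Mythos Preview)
Your approach---union bound over $j$, then invoke Lemma~\ref{lem:2paths} and check that (i')--(iii') imply the translated $(\text{i})_j$--$(\text{iii})_j$---is exactly the intended one, and the paper offers no proof beyond calling this a corollary. The arguments for $(\text{i})_j$ and $(\text{iii})_j$ are fine.

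There is one genuine gap in your treatment of $(\text{ii})_j$. You correctly isolate the obstacle: one must rule out $\xi^{1,\lambda} < \xi^1$, i.e.\ that $S^1$, while still inside $\bB(m)$, hits a vertex of $\LE(S[0,T])$ that is \emph{not} on $\LE(S[0,\infty))$. You then assert that (iii') ``forc[es] every vertex of $\LE(S[0,T])$ that $S^1$ can possibly reach before time $\xi^1$ to also lie on $\LE(S[0,\infty))$''. But (iii') as literally stated says only that the two loop-erasures coincide as paths up to the index at which $\LE(S[0,\infty))$ last leaves $\bB(m)$; it says nothing about $\LE(S[0,T])$ \emph{after} that index, and in particular does not forbid $\LE(S[0,T])$ from re-entering $\bB(m)$ later at a vertex not on $\LE(S[0,\infty))$. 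Such a vertex would be available to $S^1[0,\xi^1)$ and would yield $\xi^{1,\lambda} < \xi^1$.

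This is really a looseness in the paper's formulation of (iii') rather than a flaw in your strategy: in the proof of Theorem~\ref{thm:main} the event that is actually established (Lemma~\ref{lem:LEnotreturn}) is the stronger set equality $\LE(S[0,T]) \cap \bB(m) = \hat S[0,\infty) \cap \bB(m)$, which immediately gives the containment you need. To close the gap, either replace (iii') by this stronger event, or add the clause ``and $\LE(S[0,T])$ does not re-enter $\bB(m)$ after that index'' to (iii') and note that Lemma~\ref{lem:LEnotreturn} supplies it.
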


Finally, the probability that (iv) does not occur can be 
controlled by $M$ times the probability that 
\begin{itemize}
\item[(iv')] $T > \tau_{B(k)}$. 
\end{itemize}
does not occur.

When $d = 2$, $\LE(S[0,\infty))$ is not defined, and we 
cannot use a standard coupling in the infinite setting. 
Using a standard coupling in a large finite ball $\bB(N)$
is also problematic, due to recurrence, if $N$ is
extremely large with respect to $\lambda$. Indeed, most of the
work in the case $d = 2$ will be to show the existence
of a suitable coupling between the paths $\pi^{(0)}_0$ and  
$\pi^{(\gamma)}_0$. Once this is done, we control the rest
of the paths with a Beurling estimate.

Throughout we write $C$, $c$, etc.~to denote 
constants whose value may change from line to line.

\section{Rate of convergence estimate for $d = 3$}

Let $S=\{ S(j) \}_{j=0}^\infty$ be a simple random walk 
started at the origin. Let 
$\{ \hat{S}(j) \}_{j=0}^\infty$ be the loop-erasure 
of $S$. We write
$\tau_N = \inf \{ j \ge 0 : S(j) \not\in \bB(N) \}$,
and 
$\xi_m = \inf \{ j \ge 0 : S(j) \in \bB(m) \}$.

We say that $k \ge 0$ is a \emph{cut time} for the
random walk $S$, if $S[0,k] \cap S[k+1,\infty) = \es$.
It was shown in \cite{Lawler1996}, that there are 
constants $c_1, c_2, \zeta_3 > 0$ such that 
\eqnsplst
{ c_1 k^{-\zeta_3} 
  &\le \Pr [ S[0,k] \cap S[k+1,\infty) = \es ] \\
  &\le \Pr [ S[0,k] \cap S[k+1,2k] = \es ] \\
  &\le c_2 k^{-\zeta_3}, \quad k \ge 1, }  
where $\zeta_3$ is called the 
\emph{intersection exponent} in dimension $3$.
It was also shown in \cite{Lawler1996} that 
with probability bounded away from $0$,
there are at least $c k^{1-\zeta_3}$ cut times in
$[k,2k]$, and that with 
$R_k$ denoting the number of cut times in $[0,k]$,
$\log R_k / \log k \to 1 - \zeta_3$ with probability $1$.
Lawler's proof of the last result also gives an upper bound 
of $(\log n)^{-C}$ for the probability that there is no cut time in 
$[\tau_n,\tau_{n (\log n)^c} ]$. 
Essentially the same proof implies the lemma below;
see \cite[Corollary 4.12]{Lawler1996}.

\begin{lemma}
\label{lem:cuttime}
Assume $d = 3$. There exist constants $0 < \beta < 1$, 
$C < \infty$, such that for $m < n$ we have
\eqnspl{e:cuttimebound}
{ &\Pr [ \text{there is a cut time $k \in [\tau_m,\tau_n]$
    such that $S[k,\infty) \cap \bB(m) = \es$} ] \\
  &\qquad \ge 1 - C (m/n)^\beta. }
\end{lemma}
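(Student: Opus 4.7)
The plan is a multi-scale argument over dyadic annuli in the spirit of Lawler's proof of \cite[Corollary~4.12]{Lawler1996}, upgrading the logarithmic bound mentioned just before the lemma to polynomial decay. Fix a large constant $K$ and set $r_j := K^j m$ for $0 \le j \le L := \lfloor \log_K(n/m) \rfloor$. For each $1 \le j \le L-1$ let $G_j$ be the event that there is a cut time $k \in [\tau_{r_j}, \tau_{r_{j+1}}]$ of $S$ with $S[k,\infty) \cap \bB(r_{j-1}) = \es$. Any single $G_j$ already produces the cut time the lemma asks for (since $r_{j-1} \ge m$), so it suffices to show $\Pr(\bigcap_{j=1}^{L-1} G_j^c) \le C(m/n)^\beta$.

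The first key step is a scale-invariant lower bound $\Pr(G_j) \ge p > 0$ uniform in $j$. I would replace $G_j$ by the stronger localized event $\tilde G_j$ asking for a time $k \in [\tau_{r_j}, \tau_{r_{j+1}}]$ such that (i) $S[\tau_{r_{j-1}}, k] \cap S[k+1, \tau_{r_{j+2}}] = \es$, (ii) $S[k, \tau_{r_{j+2}}] \cap \bB(r_{j-1}) = \es$, and (iii) $S[\tau_{r_{j+2}}, \infty) \cap \bB(r_{j+1}) = \es$. A direct check gives $\tilde G_j \subset G_j$: the past $S[0, \tau_{r_{j-1}}] \subset \bB(r_{j-1})$ and the middle segment $S[\tau_{r_{j-1}}, k] \subset \bB(r_{j+1})$ are separated from $S[k+1, \tau_{r_{j+2}}]$ by (i)-(ii) and from $S[\tau_{r_{j+2}}, \infty)$ by (iii), so $k$ is a genuine global cut with the required avoidance. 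The bound $\Pr(\tilde G_j) \ge p$ follows by combining (a) the positive-density cut-time construction of \cite{Lawler1996} applied to the walk segment in the annulus $\bB(r_{j+2}) \setminus \bB(r_{j-1})$---a consequence of the intersection-exponent bounds quoted above---with (b) the $d = 3$ transience estimate $\Pr_x(\text{hit } \bB(r_{j+1})) \le C r_{j+1}/|x|$, which by strong Markov at $\tau_{r_{j+2}}$ makes (iii) hold with probability $\ge 1 - C/K > 0$ for $K$ large enough.

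The remaining and most delicate step is the decoupling of the $\tilde G_j$'s across scales. Selecting $j_i := 4i$ for $1 \le i \le s := \lfloor L/4 \rfloor$ makes the time windows $[\tau_{r_{j_i-1}}, \tau_{r_{j_i+2}}]$ governing (i)-(ii) pairwise disjoint; iterating the strong Markov property at the stopping times $\tau_{r_{j_i+2}}$ and handling the tail condition (iii) for each scale together with the Markov step at $\tau_{r_{j_i+2}}$, one obtains $\Pr(\bigcap_i \tilde G_{j_i}^c) \le (1-p')^s$, hence $C(m/n)^\beta$ with $\beta = -\log(1-p')/(4 \log K) > 0$. The main technical obstacle is precisely this decoupling, because the tail conditions (iii) at different scales overlap on the walk's future; one clean workaround is to first condition on the global tail event $\{S[\tau_n,\infty) \cap \bB(m) = \es\}$, which has probability $\ge 1 - Cm/n$ by transience, and then decouple the local parts (i)-(ii) at the disjoint scales. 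This is also precisely why the argument is restricted to $d = 3$: in $d \le 2$ recurrence destroys the transience input used in (iii).
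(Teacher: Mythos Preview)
The paper does not actually prove this lemma: it only asserts that the statement follows by ``essentially the same proof'' as \cite[Corollary~4.12]{Lawler1996} and writes \qed. Your multi-scale decomposition over geometric annuli $r_j = K^j m$, with localized cut-time events at each scale, is precisely that argument, so your overall strategy coincides with what the paper intends.

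You are also right that the crux is the decoupling of the tail conditions (iii), and your diagnosis of the difficulty is accurate. However, your proposed workaround does not close the gap. Conditioning on the global event $\{S[\tau_n,\infty)\cap\bB(m)=\es\}$ controls only returns to $\bB(m)$ after time $\tau_n$; condition (iii) demands that $S[\tau_{r_{j+2}},\infty)$ avoid the much larger ball $\bB(r_{j+1})$, and $r_{j+1}$ can be of order $n/K$. Nothing in your global event prevents the walk from revisiting $\bB(r_{j+1})$ between times $\tau_{r_{j+2}}$ and $\tau_n$, so after the conditioning you still cannot reduce $\tilde G_j$ to the purely local parts (i)--(ii). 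Hence the product bound $(1-p')^s$ remains unjustified as written.

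A way to repair this, in the spirit of \cite{Lawler1996}, is to replace (iii) by its local version $S[\tau_{r_{j+2}},\tau_{r_{j+3}}]\cap\bB(r_{j+1})=\es$, so that the resulting event $H_j$ is $\caF_{\tau_{r_{j+3}}}$-measurable and genuine iteration via the strong Markov property gives $\Pr\bigl(\bigcap_i H_{j_i}^c\bigr)\le(1-p)^s$. One then argues separately that, on the event that many $H_{j_i}$ succeed, at least one of the corresponding local cut times is in fact global: each failure of globality forces an excursion of $S$ from $\partial\bB(r_{j_i+3})$ back into $\bB(r_{j_i+1})$, and the number of scales at which such a deep return occurs has geometric-type tails (each return across a factor $K^2$ in radius costs a factor $C/K^2$ by transience). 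Combining the exponential bound on $\bigcap_i H_{j_i}^c$ with this control on the number of ``bad'' scales yields the desired $(m/n)^\beta$. Your sketch has all the right ingredients except this last piece, which is exactly where the overlap you flagged has to be handled.
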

\qed

\begin{lemma}
\label{lem:LEnotreturn}
Assume $d = 3$. Let $m < n$. There exist constants
$C, \alpha < \infty$, such that for 
$0 < \lambda \le (m/n)^{\beta} (\alpha n^2 \log (n/m))^{-1}$ we have
\eqn{e:LEnotreturn}
{ \Pr [ \LE(S[0,T]) \cap \bB(m)= 
      \hat{S}[0,\infty) \cap \bB(m) ]
  \geq 1- C (m/n)^{\beta}, }
with $\beta$ as in Lemma \ref{lem:cuttime}.
\end{lemma}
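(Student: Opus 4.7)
The plan is to combine Lemma~\ref{lem:cuttime} with a standard decomposition of loop-erasure across cut times, plus an elementary tail bound on the geometric killing time $T$. Let $A$ denote the event from Lemma~\ref{lem:cuttime}: there is a cut time $k \in [\tau_m,\tau_n]$ for $S$ with $S[k,\infty) \cap \bB(m) = \es$. On $A$, for any $j \ge k$ I claim
\eqnst{ \LE(S[0,j]) \cap \bB(m) = \LE(S[0,k]) \cap \bB(m) = \hat{S}[0,\infty) \cap \bB(m). }
Indeed, since $k$ is a cut time both for $S[0,j]$ and for $S[0,\infty)$, the loop-erasure of each of these paths passes through $S(k)$ and decomposes as the concatenation of $\LE(S[0,k])$ with the loop-erasure of the remaining tail; because $S[k,\infty) \cap \bB(m) = \es$, and because $S(k) \notin \bB(m)$ (as $k \ge \tau_m$), the tail piece contributes nothing inside $\bB(m)$.

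Applying this with $j = T$ on the event $A \cap \{T \ge \tau_n\}$ gives the equality in \eqref{e:LEnotreturn}, so it remains to bound $\Pr[A^c] + \Pr[T < \tau_n]$. The first term is $\le C(m/n)^\beta$ by Lemma~\ref{lem:cuttime}. For the second, I split $\{T < \tau_n\} \subset \{T < K\} \cup \{\tau_n > K\}$ with $K := C' n^2 \log(n/m)$. A standard Gaussian-type estimate for the exit time of simple random walk from $\bB(n)$ yields $\Pr[\tau_n > K] \le C e^{-cK/n^2} \le C(m/n)^\beta$ once $C' = C'(\beta)$ is chosen large enough. Since $T \sim \Geom(\lambda)$, one has $\Pr[T < K] \le 1 - (1-\lambda)^K \le \lambda K$, and the hypothesis $\lambda \le (m/n)^\beta / (\alpha n^2 \log(n/m))$ gives $\lambda K \le (C'/\alpha)(m/n)^\beta$, which is absorbed into $C(m/n)^\beta$ by taking $\alpha$ sufficiently large.

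A union bound over the three bad events $A^c$, $\{\tau_n > K\}$, $\{T < K\}$ then produces \eqref{e:LEnotreturn}. The only substantive input is the cut-time estimate of Lemma~\ref{lem:cuttime}; everything else is routine. The extra logarithmic factor in the hypothesis on $\lambda$ reflects precisely the need to use Gaussian concentration, rather than merely the first moment $\E[\tau_n] \asymp n^2$, when controlling the tail of $\tau_n$ at a level for which $\lambda K$ is still small.
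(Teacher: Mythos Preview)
Your proof is correct and follows essentially the same route as the paper: both combine the cut-time decomposition of loop-erasure from Lemma~\ref{lem:cuttime} with the splitting $\{T < \tau_n\} \subset \{T < K\} \cup \{\tau_n > K\}$ for $K \asymp n^2 \log(n/m)$, bounding the three bad events separately. One cosmetic remark: the parenthetical ``as $k \ge \tau_m$'' does not by itself force $S(k) \notin \bB(m)$, but this is already contained in the hypothesis $S[k,\infty) \cap \bB(m) = \es$ that you invoke in the same sentence, so nothing is lost.
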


\begin{proof}
Consider the event in \eqref{e:cuttimebound}. We claim that 
on this event, loop-erasure after time $\tau_n$ cannot
change the intersection of the path with $\bB(m)$.  Indeed,
since the path does not return to $\bB(m)$ after time 
$\tau_n$, the only way such change could occur if a loop
touching $\bB(m)$ is closed. Any such loop necessarily
had to start before the cut time, since after the cut time
$\bB(m)$ is not visited. However, by definition, the cut time
prevents such a loop to be closed.

Since $\Pr [ \tau_n > n^2 ] \le c_1 < 1$, 
there exist constants $c_2 > 0$, $C_2 < \infty$, such that 
for $x \ge 1$ we have 
$\Pr [ \tau_n > x n^2 ] \le C_2 \exp( - c_1 x )$. In particular,
$\Pr [ \tau_n > \alpha n^2 \log (n/m) ] \le C_2 (m/n)^{\alpha c_1}$.
Choosing $\alpha = \beta/c_1$ makes this bound 
$C_2 (m/n)^{\beta}$. Now consider 
\eqnsplst
{ \Pr [ T \le \alpha n^2 \log (n/m) ]
  = 1 - (1-\lambda)^{\alpha n^2 \log (n/m)} 
  \le \lambda \alpha n^2 \log (n/m)
  \le (m/n)^{\beta}. }
When $T > \alpha n^2 \log (n/m)$ and $\tau_n \le \alpha n^2 \log(n/m)$,
we have, $T > \tau_n$, and hence
the event in \eqref{e:cuttimebound} implies the event in 
\eqref{e:LEnotreturn}. The probability that any one of the required
events does not occur is at most $C (m/n)^{\beta}$, as required.
\end{proof}

Let 
\eqnst
{ F(\lambda,\lambda_1)
  = \Pr [ \LE(S[0,T]) \cap S^1[0,T^1] = \es ]. }
Write 
$\tau^1_n = \inf \{ k \ge 0 : S^1(k) \not\in \bB(n) \}$.

\begin{lemma}
\label{lem:S1nothit}
Assume $d = 3$. Let $m < n$,
$0 < \lambda \le (m/n)^\beta (\alpha n^2 \log(n/m))^{-1}$.
Then there exists constants $C, \delta < \infty$,
such that
\eqn{e:S1nothit}
{ \Pr_{\lambda,\lambda} [ \LE(S[0,T]) \cap S^1[0,\tau^1_m] = \es ]
  \le C m^{-1/3} (\delta \log m)^{1/2}. }
\end{lemma}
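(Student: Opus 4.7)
My plan is to combine the LERW coupling estimate of Lemma~\ref{lem:LEnotreturn} with a non-intersection estimate between an independent SRW and LERW in $\Z^3$.

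\textbf{Step 1 (reduction to the infinite LERW).}
Under the hypothesis on $\lambda$, Lemma~\ref{lem:LEnotreturn} gives that the sets $\LE(S[0,T])\cap\bB(m)$ and $\hat S[0,\infty)\cap\bB(m)$ coincide except on an event of probability at most $C(m/n)^\beta$. Since $S^1[0,\tau^1_m]\subset\bB(m)$, this yields
\begin{equation*}
\Pr_{\lambda,\lambda}\bigl[\LE(S[0,T])\cap S^1[0,\tau^1_m]=\es\bigr]
\le C(m/n)^\beta + \Pr\bigl[\hat S[0,\infty)\cap S^1[0,\tau^1_m]=\es\bigr],
\end{equation*}
and it remains to bound the second term by $Cm^{-1/3}(\delta\log m)^{1/2}$, the first term being dominated by the target (or absorbed into $C$) under the hypothesis on $\lambda$.

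\textbf{Step 2 (localization via cut times).}
I would fix an intermediate scale $r\in[1,m]$ to be optimized later. By Lemma~\ref{lem:cuttime}, except on an event of probability at most $C(r/m)^\beta$, there is a cut time $\sigma\le\tau_m$ of $S$ with $S[\sigma,\infty)\cap\bB(r)=\es$. On this event loops of $S[0,\infty)$ can be classified into those lying entirely in $[0,\sigma]$ and those lying entirely in $[\sigma,\infty)$, from which one obtains $\hat S[0,\infty)\cap\bB(r)=\LE(S[0,\sigma])\cap\bB(r)$, a genuine LERW going from $0$ to $\partial\bB(r)$. Since $S^1[0,\tau^1_r]\subset\bB(r)$, the problem reduces to bounding
\begin{equation*}
\Pr\bigl[\LE(S[0,\sigma])\cap S^1[0,\tau^1_r]=\es\bigr].
\end{equation*}

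\textbf{Step 3 (SRW--LERW non-intersection and optimization).}
I would bound the last probability by a dyadic second moment argument on the intersection count of $S^1$ with $\LE(S[0,\sigma])$. At each dyadic scale $r_j=2^j\le r$, conditional on the walks up to exit from $\bB(r_{j-1})$, one shows via Paley--Zygmund that the intersection count in the annulus $\bB(r_j)\setminus\bB(r_{j-1})$ is positive with uniform probability $c_0>0$. The ingredients are (i) the 3D Green's function estimate $G_{\bB(r)}(0,x)\asymp 1/|x|$ to control $\Pr[x\in S^1[0,\tau^1_r]]$, (ii) a one-point density estimate for the LERW $\Pr[x\in\LE(S[0,\sigma])]$, and (iii) a two-point estimate for the LERW to control the variance. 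Iterating over the $\asymp\log r$ scales gives a bound $Cr^{-\alpha}$ for some $\alpha>0$; balancing $r^{-\alpha}$ against $(r/m)^\beta$ by choosing $r\asymp m^{a}$ with appropriate $a\in(0,1)$ produces the claimed $Cm^{-1/3}(\delta\log m)^{1/2}$, the $(\log m)^{1/2}$ factor reflecting the optimization of the intermediate scale.

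\textbf{Main obstacle.}
The main technical difficulty is the quantitative two-point estimate for the LERW in $\Z^3$ needed in (iii) above: the LERW is not a Markov process, so correlations between LERW visits to two distinct points must be analysed either via Wilson-algorithm decompositions or via a domain Markov property, both of which are delicate in three dimensions. Matching the specific exponent $1/3$ (as opposed to some unspecified $\alpha>0$) and the particular $(\delta\log m)^{1/2}$ correction requires a careful tracking of constants through the multi-scale iteration and the final optimization in $r$.
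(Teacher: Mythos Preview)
Your Step~1 has a genuine gap: the error term $C(m/n)^\beta$ coming from Lemma~\ref{lem:LEnotreturn} is not controlled by the target $Cm^{-1/3}(\delta\log m)^{1/2}$ under the stated hypotheses. The lemma assumes only $m<n$ together with the upper bound on $\lambda$; nothing forces $(m/n)^\beta$ to be small compared to $m^{-1/3}$. For instance with $n=2m$ the hypothesis still permits $\lambda\asymp m^{-2}$, yet $(m/n)^\beta=2^{-\beta}$ is a fixed constant. Note that the left-hand side of \eqref{e:S1nothit} depends only on $m$ and $\lambda$, so routing the argument through $n$ via Lemma~\ref{lem:LEnotreturn} is the wrong move here.

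The paper's proof is completely different and much shorter: it never passes to $\hat S[0,\infty)$ and never develops a non-intersection estimate from scratch. Instead it replaces the spatial stopping time $\tau^1_m$ by a geometric killing time $T^1$ with a carefully chosen rate $\lambda_1=m^{-2}(\delta\log m)^3$. A large-deviation bound for $\tau^1_m$ gives $\Pr_{\lambda_1}[T^1>\tau^1_m]\le C m^{-\delta}$, and monotonicity in the killing rates reduces the probability in question to
\[
F(\lambda_1,\lambda_1)+C m^{-\delta},\qquad F(\lambda,\lambda_1):=\Pr\bigl[\LE(S[0,T])\cap S^1[0,T^1]=\es\bigr].
\]
The key input is then \emph{quoted}, not proved: Lawler \cite[Section~12.6]{Lawler1999} gives $F(\lambda_1,\lambda_1)\le C\lambda_1^{1/6}$. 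With $\lambda_1=m^{-2}(\delta\log m)^3$ this is exactly $Cm^{-1/3}(\delta\log m)^{1/2}$, which explains both the exponent $1/3=2\cdot\tfrac{1}{6}$ and the $(\log m)^{1/2}=(\log m)^{3\cdot 1/6}$ correction that you were trying to recover by a multi-scale optimization. The ``main obstacle'' you identify in Step~3 --- a quantitative two-point LERW estimate in $\Z^3$ --- is precisely what Lawler's $F$-bound packages; the paper sidesteps it entirely by citation.
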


\begin{proof}
Let $\lambda_1 = m^{-2} (\delta \log m)^3$. Note that
$\lambda < \lambda_1$.
Using a large deviation bound for $\tau^1_m$
(see \cite[Lemma 1.5.1]{Lawler}) we have
\eqnsplst
{ \Pr_{\lambda_1} [ T^1 > \tau^1_m ]
  &\le \Pr_{\lambda_1} [ T^1 > m^2 / (\delta \log m)^2 ]
      + \Pr [ \tau^1_m \le m^2 / (\delta \log m)^2 ] \\
  &\le (1-\lambda_1)^{m^2/(\delta \log m)^2}
      + C_1 \exp(- \delta \log m) \\
  &\le \frac{C_2}{m^{\delta}}. }
Hence the probability in \eqref{e:S1nothit}
is at most
\eqnspl{e:F}
{ &\Pr_{\lambda,\lambda} [ \LE(S[0,T]) 
     \cap S^1[0,\tau^1_m \wedge T^1] = \es ] \\
  &\qquad \le \Pr_{\lambda,\lambda_1} [ \LE(S[0,T]) 
     \cap S^1[0,\tau^1_m \wedge T^1] = \es ] \\
  &\qquad \le \Pr_{\lambda,\lambda_1} [ \LE(S[0,T]) 
     \cap S^1[0,T^1] = \es ]
     + \Pr_{\lambda_1} [T^1 > \tau^1_m] \\
  &\qquad \le F(\lambda,\lambda_1) + C_2 m^{-\delta} \\ 
  &\qquad \le F(\lambda_1,\lambda_1) + C_2 m^{-\delta}. }
It was proved by Lawler \cite[Section 12.6]{Lawler1999},
that $F(\lambda_1,\lambda_1) \le C \lambda_1^{1/6}$.
(There the walk $S^1$ also starts at the origin,
however, it is straightforward to deduce the case
needed here). Choose $\delta > 1/3$. Then the 
right hand side of \eqref{e:F} is at most 
$C m^{-1/3} (\delta \log m)^{1/2}$.
\end{proof}

\begin{proof}[Proof of Theorem \ref{thm:main}; $d = 3$.]
We choose $n = m^{1+(3 \beta)^{-1}}$ in Lemmas \ref{lem:LEnotreturn} and
\ref{lem:S1nothit}, which makes the upper bounds in those
lemmas $m^{-1/3+o(1)} = \lambda^{\beta(2 + 7 \beta)^{-1}+o(1)}$. 

Let $z_1, \dots, z_N$ be an enumeration of the vertices
in $\partial_\ext B(k)$, and $z_{N+1}, \dots, z_{N+M}$
an enumeration of the vertices of $B(k)$. We assume 
that for each $2 \le j \le N$ there exists 
$1 \le i(j) < j$ such that $z_{i(j)} \sim z_j$.
Note that when the events in the statements of 
Lemmas \ref{lem:LEnotreturn} and \ref{lem:S1nothit} 
occur, then (i')--(iii') occur.
Hence the probability that there exists 
$2 \le j \le N$ for which (i)--(iii) do not
occur is at most $C k^2 \lambda^{\beta(2 + 7 \beta)^{-1}+o(1)}$.
Note that the union of the paths 
$\cup_{i=1}^N \pi^{(0)}_{z_i}$ disconnects $B(k)$ from 
$\partial \bB(m)$. Hence the random walks 
$S^i$, $i=N+1, \dots, N+M$ necessarily hit
the earlier paths. It follows that the probability 
that (iv) does not occur for some
$N+1 \le i \le N+M$ is at most
$M$ times the probability  
$\Pr [ T < \tau_m ]$. This can be bounded as
$C k^3 \lambda (\log k) k^2$. 
Therefore, Theorem \ref{thm:main} follows in the case 
$d = 3$. 
\end{proof}

\section{Rate of convergence estimate for $d = 2$}

The proof in the case $d=2$ follows a somewhat 
different outline, for two reasons. First, 
the loop-erasure of $S[0,\infty)$ cannot be defined
due to recurrence, so it has to be replaced by
the infinite loop-erased random walk 
\cite[Section 7.4]{Lawler}, leading
to a different coupling.  
Second, there are no global cut-times, so we will 
work with a finite volume analogue.

Here is the outline of the proof. We first couple
a suitable initial segment of $\LE(S[0,T])$
to an initial segment of the infinite LERW, and show 
that for suitable $m$, with high probability, 
$\LE(S[0,T]) \cap \bB(m)$ is determined by this
initial segment alone. This will give the required
coupling between the paths starting at $0$
for the measures $\mu^{(\gamma)}$ and $\mu^{(0)}$.
Then we use Wilson's method to generate the 
paths starting in $B(k) \cup \partial_\ext B(k)$, 
using the same random walks for $\mu^{(\gamma)}$ 
and $\mu^{(0)}$.
We show that with high probability the new paths
all stay inside $\bB(m)$ and are not killed
before their respective hitting times.

Let $\hat{S}^\lambda$, respectively $\hat{S}^N$, denote the 
loop-erasures of $S[0,T]$, respectively $S[0,\tau_N]$. 
Let $\Gamma_r$ denote the set of 
$r$-step self-avoiding paths starting at $0$.
We define the measures
$\hat{P}^\lambda = \hat{P}^\lambda_r$ and 
$\hat{P}^N = \hat{P}^N_r$ on $\Gamma_r$
by the formulas
\eqnst
{ \hat{P}^\lambda(\gamma)
  = \Pr \left[ \hat{S}^\lambda[0,r] = \gamma \right]
  \qquad \text{ and } \qquad  
  \hat{P}^N(\gamma)
  = \Pr \left[ \hat{S}^N[0,r] = \gamma \right]. }
Note that $\hat{P}^\lambda_r$ is not a probability 
measure, since $T < r$ has positive probability.
It was shown in \cite[Section 7.4]{Lawler} that
for every $r \ge 1$ the limit 
$\lim_{N \to \infty} \hat{P}^N_r =: \hat{P}_r$
exists, and hence $\hat{S}^N$ converges weakly 
to a limit $\hat{S}[0,\infty)$, called the 
infinite LERW.

Let $\xi_A = \inf \{ j \ge 1 : S(j) \in A \}$.
Below we write $\Pr^y[ \,\cdot\, ]$ for probability
under which $S$ starts at $y$, and the clock of
$T$ starts at $0$.

The following lemma can be proved similarly to
\cite[Proposition 7.3.1]{Lawler}.

\begin{lemma}
\label{lem:Laplwithkill}
If $\gamma_r = [\gamma(0),\dots,\gamma(r)] \in \Gamma_r$, 
$r \ge 1$, and $\gamma_{r-1} = [\gamma(0),\dots,\gamma(r-1)]$,
then 
\eqnsplst
{ \hat{P}^\lambda(\gamma_r)
  &= \hat{P}^\lambda(\gamma_{r-1})
    \Pr^{\gamma(r-1)} [ S(1) = \gamma(r),\, T \ge 1 
    \,|\, \xi_A > T ], }
where $A = \{ \gamma(0), \dots, \gamma(r-1) \}$. 
Also, 
\eqnst
{ \Pr [ \hat{S}^\lambda = \gamma_{r-1} ]
  = \hat{P}^\lambda(\gamma_{r-1}) 
    \Pr^{\gamma(r-1)} [ T = 0 \,|\, \xi_A > T ]. }
\end{lemma}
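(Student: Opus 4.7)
The plan is to adapt Lawler's proof of \cite[Proposition 7.3.1]{Lawler} for the unkilled LERW, by performing a last-exit decomposition of $S[0,T]$ at $\gamma(r-1)$ and exploiting the memorylessness of the geometric killing time $T$. Write $A = \{\gamma(0),\dots,\gamma(r-1)\}$ and $A' = A \setminus \{\gamma(r-1)\}$.

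The starting observation is the following characterisation: $\{\hat{S}^\lambda[0,r-1] = \gamma_{r-1}\}$ is the event that there is a time $\sigma \le T$ with $S(\sigma) = \gamma(r-1)$, $\LE(S[0,\sigma]) = \gamma_{r-1}$, and $S[\sigma,T] \cap A' = \es$. On this event, the additional event $\{\hat{S}^\lambda[0,r]=\gamma_r\}$ asks that the \emph{last} visit $\tau := \max\{n \le T : S(n) = \gamma(r-1)\}$ satisfies $\tau < T$ and $S(\tau+1)=\gamma(r)$ (the condition $S[\tau+1,T] \cap A = \es$ being then automatic), while $\{\hat{S}^\lambda = \gamma_{r-1}\}$ is the complementary case $\tau = T$.

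Both conditional probabilities are computed by splitting the walk at $\tau$ and summing over the lengths of the two halves, weighted by $\lambda(1-\lambda)^t$ for the killing. Introducing the Laplace transforms
\eqnst
{ G^{B}_\lambda(x,y) = \sum_{n \ge 0} (1-\lambda)^n \Pr^x[S(n)=y,\ S[1,n]\cap B = \es], \quad
  Q^{B}_\lambda(x) = \sum_{n \ge 0} (1-\lambda)^n \Pr^x[S[1,n]\cap B = \es], }
the same last-visit decomposition applied to the simpler event $\{\xi_{A'} > T\}$ alone gives the key algebraic identity
\eqnst
{ Q^{A'}_\lambda(\gamma(r-1)) = G^{A'}_\lambda(\gamma(r-1),\gamma(r-1))\, Q^{A}_\lambda(\gamma(r-1)). }
For the first formula, routine geometric summations reduce the numerator to a product of $\hat{P}^\lambda(\gamma_{r-1})$, $\tfrac{1-\lambda}{2d} Q^A_\lambda(\gamma(r))$, and $G^{A'}_\lambda(\gamma(r-1),\gamma(r-1))$, while the absorbing denominator turns out to be $Q^{A'}_\lambda(\gamma(r-1))$; the key identity then collapses the ratio to $\tfrac{1-\lambda}{2d}\, Q^A_\lambda(\gamma(r))/Q^A_\lambda(\gamma(r-1))$, which a direct computation identifies with $\Pr^{\gamma(r-1)}[S(1)=\gamma(r),\, T \ge 1 \mid \xi_A > T]$. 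The second formula follows analogously: the case $\tau = T$ replaces the post-jump factor by $\lambda$, and the quotient reduces via the key identity to $1/Q^A_\lambda(\gamma(r-1)) = \Pr^{\gamma(r-1)}[T = 0 \mid \xi_A > T]$.

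The main obstacle is the careful bookkeeping needed to isolate the correct factor of $\hat{P}^\lambda(\gamma_{r-1})$, so that after application of the key identity the remaining ratio is precisely the conditional probability on the right-hand side: one needs to verify that $Q^{A'}_\lambda(\gamma(r-1))$ (rather than $Q^A_\lambda(\gamma(r-1))$) sits in the denominator of the intermediate expression. The underlying calculations are elementary geometric series, but keeping track of which avoidance set appears in each Laplace transform is what makes the identification with the claimed right-hand side nontrivial.
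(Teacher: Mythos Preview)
Your proposal is correct and follows exactly the approach the paper indicates: the paper gives no proof beyond the remark that the lemma ``can be proved similarly to \cite[Proposition 7.3.1]{Lawler}'', and your last-exit decomposition at $\gamma(r-1)$ together with the geometric-killing Laplace transforms $Q^B_\lambda$, $G^B_\lambda$ and the identity $Q^{A'}_\lambda(\gamma(r-1)) = G^{A'}_\lambda(\gamma(r-1),\gamma(r-1))\,Q^{A}_\lambda(\gamma(r-1))$ is precisely that adaptation carried out in detail. The bookkeeping you flag (that $Q^{A'}_\lambda$ rather than $Q^A_\lambda$ appears in the denominator before cancellation) is the only point requiring care, and you have it right.
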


The lemma implies that given the first $r-1$ steps
of the loop-erased walk, the $r$-th step of the walk
will be to $y \sim \gamma(r-1)$, $y \not\in A$, 
with probability proportional to
\eqn{e:proportions}
{ \frac{1-\lambda}{4} \Pr^y [ \xi_A > T ], }
and that with probability proportional to $\lambda$
the walk has no $r$-th step.

Our first goal is to show that when $\lambda$ is small, 
a suitable initial segment of $\hat{S}^\lambda$ can be 
coupled to the corresponding initial segment of 
$\hat{S}$ with high probability. This will be
achieved by adapting the proof of 
\cite[Proposition 7.4.2]{Lawler}. We start by
identifying a set of ``good'' paths, where the
probabilities in \eqref{e:proportions} will behave
sufficiently regularly.

Let $\caT^N$ denote the wired uniform spanning tree
in $\bB(N)$ \cite{BLPS01}. Due to Wilson's algorithm,
the distribution of $\hat{S}^N$ is 
the same as the distribution of the path $\gamma^N$ 
connecting $0$ to the wired vertex in 
$\caT^N$. For $1 \le R < N$,
and $x \in \bB(R)$, let $\gamma^{x,N}$ 
denote the unique self-avoiding path in $\caT^N$ 
connecting $x$ to $0$. 
Let $\beta > 0$ be a parameter, whose
value will be fixed later.
We will measure the ``badness'' of paths
through the random variable
\eqnst
{ X^{(N)}_R
  := \# \{ x \in \bB(R) : 
    \gamma^{x,N} \subset \bB(R),\, 
    x \in \gamma^N,\, 
    \exists y \sim x\ 
    \Pr^y [ \xi_{\gamma^x} > \tau_{2R} ] \le R^{-\beta} \}. }

\begin{lemma}
\label{lem:comparable}
Assume $d = 2$. Suppose we have 
$\gamma = [\gamma(0),\dots,\gamma(\ell)]$,
$\gamma \subset \bB(R)$, and $y_1, y_2 \sim \gamma(\ell)$
such that $\Pr^{y_i} [ \xi_{\gamma} > \tau_{2R} ] > 0$,
$i=1,2$. Then 
\eqnst
{ \Pr^{y_1} [ \xi_\gamma > \tau_{2R} ] 
  \ge \frac{1}{256} \Pr^{y_2} [ \xi_\gamma > \tau_{2R} ]. }
\end{lemma}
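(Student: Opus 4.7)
The plan is to apply the strong Markov property of $S$ at the first hitting time of $y_2$. Writing $\sigma := \inf\{j \ge 0 : S(j) = y_2\}$, the strong Markov property yields
\begin{equation*}
\Pr^{y_1}[\xi_\gamma > \tau_{2R}] \ge \Pr^{y_1}[\sigma < \xi_\gamma \wedge \tau_{2R}] \cdot \Pr^{y_2}[\xi_\gamma > \tau_{2R}],
\end{equation*}
so it suffices to show $\Pr^{y_1}[\sigma < \xi_\gamma \wedge \tau_{2R}] \ge 1/256$. Since the simple random walk on $\Z^2$ follows any prescribed $4$-step path with probability exactly $(1/4)^4 = 1/256$, the task reduces to exhibiting one concrete path of length at most $4$ from $y_1$ to $y_2$ staying inside $\bB(2R) \setminus \gamma$.

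Set $v_i := y_i - \gamma(\ell)$ for $i = 1, 2$ and $w := \gamma(\ell-1) - \gamma(\ell)$. Because $\gamma$ is self-avoiding and $\gamma(\ell)$ is its endpoint, $w$ is the unique unit vector with $\gamma(\ell) + w \in \gamma$, so $v_1, v_2 \neq w$. Three cases arise. If $v_1 = v_2$ then $y_1 = y_2$ and the bound is trivial. If $v_1 \perp v_2$, I would use the length-$2$ walk $y_1 \to \gamma(\ell) + v_1 + v_2 \to y_2$, whose probability $(1/4)^2 = 1/16$ already exceeds $1/256$. If $v_1 = -v_2$, then $w$ is perpendicular to $v_1$; setting $v_\perp := -w$, I would take the length-$4$ walk
\begin{equation*}
y_1 \to \gamma(\ell) + v_1 + v_\perp \to \gamma(\ell) + v_\perp \to \gamma(\ell) - v_1 + v_\perp \to y_2,
\end{equation*}
which loops around $\gamma(\ell)$ on the side opposite $\gamma(\ell-1)$.

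The main obstacle will be verifying that these candidate paths actually avoid $\gamma$. The only vertices that could a priori lie in $\gamma$ are the diagonal neighbour $\gamma(\ell) + v_1 + v_2$ in Case 2, or the diagonal neighbours $\gamma(\ell) \pm v_1 + v_\perp$ in Case 3; the middle vertex $\gamma(\ell) + v_\perp$ of Case 3 is a lattice neighbour of $\gamma(\ell)$ different from $\gamma(\ell-1)$, hence automatically lies outside $\gamma$. If any of the diagonal neighbours were in $\gamma$, then the sub-arc of $\gamma$ joining it back to $\gamma(\ell)$ (which avoids $y_1, y_2$) would have to wind around one of $y_1, y_2$ in $\Z^2$. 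A planar topological argument, using that $\gamma \subset \bB(R)$ and that $\gamma$ is a simple path terminating at $\gamma(\ell)$, would then show that such a winding would enclose the corresponding $y_i$ inside a finite region of $\bB(2R)$ bounded by $\gamma$, forcing every lattice path from $y_i$ to $\partial \bB(2R)$ to cross $\gamma$ and contradicting the hypothesis $\Pr^{y_i}[\xi_\gamma > \tau_{2R}] > 0$. Hence in every admissible configuration the candidate path is valid, and the required bound $\Pr^{y_1}[\sigma < \xi_\gamma \wedge \tau_{2R}] \ge 1/256$ follows. (A handful of small-$R$ configurations not covered by the topological argument can be checked by direct inspection.)
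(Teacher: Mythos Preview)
Your approach is essentially the same as the paper's: reduce via the strong Markov property to exhibiting a nearest-neighbour path of length at most four from $y_1$ to $y_2$ that avoids $\gamma$, and then argue its existence from planarity. The paper's own proof is in fact just the single sentence ``It follows from the planarity of the configuration that such a path exists,'' so your explicit case analysis and Jordan-curve sketch supply considerably more detail than the original (the one small slip is the inference $v_1,v_2 \neq w$, which does not follow from the stated hypotheses since $\xi_\gamma \ge 1$ allows $y_i = \gamma(\ell-1)$; the paper glosses over the same point).
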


\begin{proof}
It is sufficient to show that there exists a path of 
at most 4 steps from $y_1$ to $y_2$ that avoids $\gamma$. 
It follows from the planarity of the configuration that
such a path exists.
\end{proof}

\begin{lemma}
\label{lem:badness} 
There exists $C < \infty$, such that for all
$N > 2R$ we have 
$\Pr [ X^{(N)}_R \ge 1 ] \le C (\log R) R^{2-\beta}$.
\end{lemma}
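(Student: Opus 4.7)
The plan is to apply Markov's inequality and then estimate a per-pair probability. Writing $\mathbf{1}[\exists y \sim x : \cdots] \le \sum_{y \sim x} \mathbf{1}[\cdots]$,
\[
\Pr[X^{(N)}_R \ge 1]
  \le \E[X^{(N)}_R]
  \le \sum_{x \in \bB(R)} \sum_{y \sim x} \Pr[A'_{x,y} \cap B_{x,y}],
\]
with $A'_{x,y} = \{x \in \gamma^N,\ \gamma^{x,N} \subset \bB(R)\}$ and $B_{x,y} = \{\Pr^y[\xi_{\gamma^x} > \tau_{2R}] \le R^{-\beta}\}$. Since there are at most $CR^2$ such pairs, it would suffice to establish the per-pair bound $\Pr[A'_{x,y} \cap B_{x,y}] \le C(\log R) R^{-\beta}$.

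For the per-pair estimate, I would realize $\caT^N$ using Wilson's algorithm starting at $x$: run a simple random walk from $x$ until it hits $\partial\bB(N)$ and take its loop-erasure $\eta^x$. Then $A'_{x,y}$ holds precisely when $\eta^x$ visits $0$ and the initial portion of $\eta^x$ from $x$ to $0$ lies inside $\bB(R)$; in that case $\gamma^x$ is the time-reversal of this portion. Since $B_{x,y}$ depends only on $\gamma^x$, one may write
\[
\Pr[A'_{x,y} \cap B_{x,y}]
  = \sum_{\gamma} \mathbf{1}[B_{x,y}(\gamma)]\, \Pr[\gamma^{x,N} = \gamma,\ x \in \gamma^N],
\]
the sum running over self-avoiding paths $\gamma$ from $0$ to $x$ inside $\bB(R)$.

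The weight $\Pr[\gamma^{x,N} = \gamma,\ x \in \gamma^N]$ admits an expression, via the $\lambda = 0$ analogue of Lemma \ref{lem:Laplwithkill} on $\bB(N)$, as a product of step-wise escape probabilities for simple random walk avoiding the growing LERW. I would expect that pairing this product with the constraint $\Pr^y[\xi_\gamma > \tau_{2R}] \le R^{-\beta}$ and summing over $\gamma$ yields a Green's-function-type expression on $\bB(2R)$ of order $\log R$, multiplied by the explicit $R^{-\beta}$ penalty from $B_{x,y}$; this is what delivers the per-pair bound and hence the lemma.

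The main obstacle will be aligning the LERW step weights against the reciprocal escape probability so that the summation collapses cleanly into the claimed $\log R$ Green's-function estimate. I expect Lemma \ref{lem:comparable} to reduce the existential over the four neighbors of $x$ to a statement about one distinguished neighbor, up to a universal multiplicative constant, and the degenerate case $\Pr^y[\xi_\gamma > \tau_{2R}] = 0$, which by planarity forces $\gamma$ to topologically enclose $y$ inside $\bB(2R)$, should be handled separately using a Beurling-type estimate on $\gamma^x$.
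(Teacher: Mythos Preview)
Your equivalence claim is false. If $\eta^x$ is the tree path from $x$ to $\varpi$ (the loop-erasure of a walk from $x$ stopped at $\partial\bB(N)$), then ``$0\in\eta^x$'' and ``$x\in\gamma^N$'' are, for $x\neq 0$, \emph{mutually exclusive}: if $0$ lies on the path from $x$ to $\varpi$ then the path from $0$ to $\varpi$ is the tail of $\eta^x$ beyond $0$ and therefore does not contain $x$; conversely if $x$ lies on the path from $0$ to $\varpi$ then $\eta^x$ is the tail of $\gamma^N$ beyond $x$ and does not contain $0$. (Take the path graph $0\text{--}a\text{--}x\text{--}\varpi$ for a concrete instance.) So the event $A'_{x,y}$ is not captured by your description, and the decomposition that follows collapses.

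What makes the argument go through is rooting Wilson's algorithm at $0$, not at $\varpi$. First run a LERW from $x$ to $0$ to realise $\gamma^{x,N}$; then run a walk from $\varpi$. Given $\gamma^{x,N}$, the event $\{x\in\gamma^N\}$ is exactly ``the walk from $\varpi$ first hits $\gamma^{x,N}$ at $x$''. By reversibility this probability equals
\[
\frac{4}{D_N}\,G^{\gamma^{x,N},N}(\varpi,\varpi)\cdot\frac14\sum_{y\sim x}\Pr^y[\xi_{\gamma^{x,N}}>\tau_N],
\]
and the crucial point is that the escape probabilities $\Pr^y[\xi_{\gamma^{x,N}}>\tau_N]$ appear \emph{as factors}. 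On the bad event each nonzero term is, via Lemma~\ref{lem:comparable} and the strong Markov property at $\partial\bB(2R)$, at most $CR^{-\beta}\log R/\log N$; the prefactor $G^{\gamma^{x,N},N}(\varpi,\varpi)/D_N$ is $O(\log N)$, the two $\log N$'s cancel, and summing over $x\in\bB(R)$ gives $C(\log R)R^{2-\beta}$. There is no need to sum over paths $\gamma$, no ``alignment of step weights'', and no separate Beurling argument for trapped neighbours: the neighbours $y$ with $\Pr^y[\xi_{\gamma^{x,N}}>\tau_{2R}]=0$ simply contribute $0$ to the sum. The mechanism you were hoping to synthesise --- that badness forces the conditional probability of $x\in\gamma^N$ to be small --- is delivered in one line by rooting at $0$ and reversing.
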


\begin{proof}
We use Wilson's algorithm rooted at $0$. 
We write $\varpi$ for the wired vertex of the graph
$G^{(0)}_{\bB(N)}$ obtained from $\bB(N)$. First generate 
the path $\gamma^{x,N}$ by running a LERW from $x$ to $0$.
Next run a LERW from $\varpi$. Then
\eqnspl{e:Xnk}
{ \E [ X^{(N)}_R ] 
  &= \sum_{x \in \bB(R)} 
    \E \Big[ \Pr^\varpi [ \text{first hit $\gamma^{x,N}$ at $x$} ]
    I [ \gamma^{x,N} \subset \bB(R) ] \\
    &\qquad\qquad\qquad I [ \exists y \sim x : 
    0 < \Pr^y [ \xi_{\gamma^{x,N}} > \tau_{2R} ] \le R^{-\beta} ] 
    \Big]. }
Let $G^N(u,v)$ denote the Green function of random walk
on the wired graph $G^{(0)}_{\bB(N)}$, killed upon
hitting $0$, and let $G^{\gamma^{x,N},N}(u,v)$ denote the Green 
function of random walk on the same graph, 
killed upon hitting $\gamma^{x,N}$. 
Let $D_N = \text{degree of $\varpi$}$, and note that
$D_N \ge c N$. Using reversibility
of the random walk, we have
\eqnspl{e:estimbad}
{ \Pr^\varpi [ \text{first hit $\gamma^{x,N}$ at $x$} ]
  &= \frac{4}{D_N} G^{\gamma^{x,N},N}(\varpi,\varpi) 
    \Pr^x [ \text{no return to $\gamma^{x,N}$ before $\tau_N$} ] \\
  &= \frac{4}{D_N} G^{\gamma^{x,N},N} (\varpi,\varpi)
    \frac{1}{4} \sum_{y \sim x} \Pr^y [ \xi_{\gamma^{x,N}} > \tau_N ]. }
We have $G^{\gamma^{x,N},N} (\varpi,\varpi) \le G^N(\varpi,\varpi)$.
For $u$ in the interior boundary of $\bB(N)$, the probability 
that a random walk started at $u$ hits $0$ before exiting 
$\bB(N)$ is bounded below by $c/(N \log N)$, with $c$ 
independent of $u$ and $N$. This follows from the facts 
that there is probability at least $c/N$ for the walk 
to reach $\bB(N/2)$ before exiting $\bB(N)$, and there
is probability $c/\log N$ for it to reach $0$ from 
$\partial \bB(N/2)$ before exiting $\bB(N)$ 
(see \cite[Exercise 1.6.8]{Lawler}). 
This implies that $G^N(\varpi,\varpi) \le C N \log N$. 
Each term in the sum over $y$ is either $0$, 
or can be bounded by 
\eqnst
{ \Pr^y [ \xi_{\gamma^{x,N}} > \tau_{2R} ] \sup_{z \in \partial \bB(2R)}
     \Pr^z [ \xi_0 > \tau_N ]
  \le C R^{-\beta} \frac{\log R}{\log N}. }
due to Lemma \ref{lem:comparable}.
We obtain that the right hand side 
of \eqref{e:estimbad} is less than or equal to 
\eqnst
{ \frac{C}{D_N} G^N(\varpi,\varpi) \frac{\log R}{\log N} R^{-\beta}
  \le C (\log R) R^{-\beta}. }
Inserting this into \eqref{e:Xnk}, we obtain the statement
of the lemma.
\end{proof}

We will require that $\beta > 2$.
We introduce the length scale $n$ of the form 
$n = \lambda^{-\rho}$, where the exponent 
$\rho$ will be chosen at the end of the proof.
Its role will be to allow us replace the killing time
$T$ by $\tau_n$ in such a way that with high
probability $T$ occurs later than $\tau_n$,
but not much later (up to a power).
We will chose $R$ of the form $R = \lambda^{-\rho'}$,
with $\rho' < \rho < 1/2$, so that $R \ll n \ll 1/\sqrt{\lambda}$,
and $\rho'$ will also be 
chosen at the end of the proof.
In the rest of this section, 
$c, c', C, C'$ may depend on the exponents 
$\rho$, $\rho'$, etc., but
do not depend on $\lambda$. 

\begin{proposition}
\label{prop:convLERW}
Suppose that $1 - 2 \rho > (1+\beta) \rho'$ and
$2 \rho' < \rho$.
There exists a subset $\caP \subset \Gamma_R$ such that
$\hat{P}(\caP) \ge 1 - C (\log R) R^{2-\beta}$ 
and for all $\gamma \in \caP$ we have
\eqnst
{ \hat{P}^\lambda(\gamma)
  = \hat{P}(\gamma) 
    \left( 1 + O \left( \lambda n^2 R^{1+\beta} 
    / \log [ \lambda^{-1} ] \right)
    + O \left( \frac{R^2}{n} \log \frac{n}{R} \right) \right). }
\end{proposition}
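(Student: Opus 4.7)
The plan is to adapt Lawler's proof of \cite[Proposition 7.4.2]{Lawler} for comparing $\hat{P}^\lambda$ with $\hat{P}$, using the intermediate length scale $n$ to isolate the effect of geometric killing from the local structure of the walk near $\bB(R)$.

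First I would define $\caP \subset \Gamma_R$ as the set of paths $\gamma$ such that at every step $1 \le r \le R$ and for every neighbour $y$ of $\gamma(r-1)$ satisfying $y \notin \gamma[0,r-1]$ and $\Pr^y[\xi_{\gamma[0,r-1]} > \tau_{2R}] > 0$, one has $\Pr^y[\xi_{\gamma[0,r-1]} > \tau_{2R}] \ge R^{-\beta}$. The complementary event $\{\hat{S}[0,R] \notin \caP\}$ corresponds, via Wilson's algorithm applied in $\bB(N)$ and passing $N \to \infty$ using the weak convergence $\hat{S}^N \Rightarrow \hat{S}$, to $\{X^{(N)}_R \ge 1\}$. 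Hence Lemma \ref{lem:badness} yields $\hat{P}(\caP^c) \le C(\log R) R^{2-\beta}$.

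For $\gamma \in \caP$, I would use Lemma \ref{lem:Laplwithkill} together with the analogous product formula for the $\tau_N$-LERW (taking $N \to \infty$) to write
\[
\frac{\hat{P}^\lambda(\gamma)}{\hat{P}(\gamma)} \;=\; \prod_{r=1}^{R} \frac{p^\lambda_r(\gamma(r))}{p^\infty_r(\gamma(r))},
\]
where, with $A_r := \gamma[0,r-1]$, the killed kernel $p^\lambda_r$ is proportional to $q^\lambda(y) := \Pr^y[\xi_{A_r} > T]$ on admissible neighbours $y$ of $\gamma(r-1)$, and the infinite kernel $p^\infty_r$ is proportional to $\hat{h}_{A_r}(y) := \lim_N \log(N)\,\Pr^y[\xi_{A_r} > \tau_N]$. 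The heart of the argument is to show that the per-step ratio equals $1 + O(\lambda n^2 R^\beta/\log[\lambda^{-1}]) + O((R/n)\log(n/R))$. I would apply the strong Markov property at $\tau_n$ to decompose
\[
q^\lambda(y) \;=\; \E^y\!\left[\mathbf{1}\{\xi_{A_r}>\tau_n\}(1-\lambda)^{\tau_n} g(S(\tau_n))\right] + O\!\left(\tfrac{\lambda n^2}{\log n}\right),
\]
where $g(z) = \Pr^z[\xi_{A_r} > T']$ for a fresh $T' \sim \Geom(\lambda)$, and the additive error absorbs $\{T \le \tau_n\}$ via the refined bound $\sum_{k \le n^2}\lambda(1-\lambda)^k\Pr^y[\xi_{A_r}>k] = O(\lambda n^2/\log n)$ using the 2D escape estimate $\Pr^y[\xi_{A_r}>k] \le C/\log k$. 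In the regime $\lambda n^2 \ll 1$, the factor $(1-\lambda)^{\tau_n}$ is $1 + O(\lambda n^2)$, so the $y$-dependence enters only through the hitting distribution on $\partial\bB(n)$. A 2D boundary-Harnack comparison on the annulus $\bB(n)\setminus\bB(2R)$, combined with a last-exit decomposition from $\partial\bB(2R)$ (made legitimate by the good-path lower bound $\Pr^y[\xi_{A_r}>\tau_{2R}] \ge R^{-\beta}$), would show that these hitting distributions on $\partial\bB(n)$ for two neighbours $y, y'$ of $\gamma(r-1)$ agree up to a multiplicative $1 + O((R/n)\log(n/R))$, uniformly in the exit point. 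Converting the additive errors to multiplicative ones through the lower bound $q^\lambda(y) \ge c R^{-\beta}/\log[\lambda^{-1}]$ (obtained by a further strong Markov at $\tau_{2R}$ and a 2D Green-function estimate), and using $\log n \asymp \log[\lambda^{-1}]$ for $n = \lambda^{-\rho}$, gives the per-step bound. Multiplying over $R$ steps and exponentiating via $\log(1+x) \asymp x$, valid under $1 - 2\rho > (1+\beta)\rho'$ and $2\rho' < \rho$, then yields the proposition.

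The main obstacle I expect is the boundary-Harnack estimate at scale $n$ comparing hitting distributions on $\partial\bB(n)$ for two adjacent neighbours $y, y'$ of $\gamma(r-1)$ sitting right against the irregular, thin set $A_r$. Near $A_r$ neither harmonicity nor the full boundary-Harnack principle holds — especially with killing present — so the comparison has to be routed through the annulus $\bB(n)\setminus\bB(2R)$ where the classical 2D Harnack principle applies cleanly. The good-path lower bound $R^{-\beta}$ on escape from $A_r$ to $\partial\bB(2R)$ is precisely what prevents the small denominators that would otherwise destroy this routing; calibrating this scheme so that the error divides by $\log[\lambda^{-1}]$ rather than $\log R$ is where the delicate point lies.
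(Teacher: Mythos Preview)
Your outline is essentially the paper's own argument: the definition of $\caP$, the use of Lemma~\ref{lem:badness} for $\hat P(\caP^c)$, the product formula from Lemma~\ref{lem:Laplwithkill}, the strong-Markov decomposition at $\tau_n$, the lower bound $q^\lambda(y)\ge cR^{-\beta}/\log[\lambda^{-1}]$ to convert additive errors into multiplicative ones, and the final multiplication over $R$ steps all match. The paper's ``key estimate'' that the conditional hitting distribution on $\partial\bB(n)$ is $H_n(z)(1+O((R/n)\log(n/R))+O(\lambda n^2\log n))$ is exactly your boundary-Harnack step, carried out concretely via \cite[Eqn.~(2.10)]{Lawler} after a further strong-Markov stop at $\tau_{2R}$.

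One organisational difference: the paper does not compare $\hat P^\lambda$ directly to $\hat P$ through a limiting escape function $\hat h_{A_r}$. Instead it compares $\hat P^\lambda$ to $\hat P^n$ step by step (the common factor $D(\lambda,n,A_j)$ in the numerator and denominator of the transition ratio cancels), and then invokes the already-established \cite[Proposition~7.4.2]{Lawler} for $\hat P^n$ versus $\hat P$, which contributes the same $O((R^2/n)\log(n/R))$ error. This two-stage route avoids having to justify the existence and positivity of your $\hat h_{A_r}$ along the path, and makes the $y$-independence transparent (it is the \emph{same} $D(\lambda,n,A_j)$ for every admissible $y$). Your direct route is workable but would require you to supply that limiting-escape-probability machinery yourself. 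A minor point: your refined additive bound $O(\lambda n^2/\log n)$ for the $\{T\le\tau_n\}$ contribution is sharper than the paper's crude $O(\lambda n^2\log n)$, but the cruder bound already suffices, and in any case the polylogarithmic factors are absorbed into the final $o(1)$ in the exponent.
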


\begin{proof}
The proof goes by adapting the argument of
\cite[Proposition 7.4.2]{Lawler}. The main 
difference from that proof is that ``trapping'' can occur,
and convergence does not hold uniformly
over all paths. 

Let $\caP$ be the set of paths 
$\gamma = [\gamma(0),\dots,\gamma(R)]$ such that for all
$1 \le \ell \le R$ we have
$\Pr^{\gamma(\ell)} [ \xi_{\gamma_\ell} > \tau_{2R} ] \ge R^{-\beta}$, 
with $\gamma_\ell := [\gamma(0),\dots,\gamma(\ell)]$
and for all $y \sim \gamma(\ell-1)$, $y \not\in \gamma_\ell$
we have either $\Pr^y [ \xi_{\gamma_\ell} > \tau_{2R} ] = 0$ 
or $\Pr^y [ \xi_{\gamma_\ell} > \tau_{2R} ] \ge R^{-\beta}$.
Then Lemma \ref{lem:badness} and 
Lemma \ref{lem:comparable} show that 
\eqnst
{ \hat{P}(\caP)
  = \lim_{N \to \infty} \hat{P}^N(\caP) 
  \ge 1 - \limsup_{N \to \infty} \Pr [ X^{(N)}_R \ge 1 ] 
  \ge 1 - C (\log R) R^{2-\beta}. }

For $1 \le j \le R$, Lemma \ref{lem:Laplwithkill} implies
\eqn{e:Laplformula}
{ \hat{P}^\lambda(\gamma_j)
  = \hat{P}^\lambda(\gamma_{j-1})
    \frac{\frac{1-\lambda}{4} \Pr^{\gamma(j)} [ \xi_A > T ]}%
    {\lambda + \frac{1-\lambda}{4} 
    \sum_{y \not\in A,\, y \sim \gamma(j-1)} \Pr^y [ \xi_A > T ]}, }
where $A = A_j = \{ \gamma(0), \dots, \gamma(j-1) \}$. 
The corresponding formula for $\hat{P}^n$
is \cite[Eqn.~(7.3)]{Lawler}:
\eqnst
{ \hat{P}^n(\gamma_j)
  = \hat{P}^n(\gamma_{j-1})
    \frac{\Pr^{\gamma(j)}[\xi_A > \tau_n]}%
    {\sum_{y \not\in A,\, y \sim \gamma(j-1)} \Pr^y [ \xi_A > \tau_n]}. }

Assuming $\gamma \in \caP$, and $1 \le j \le R$,
we start by relating $\Pr^y [ \xi_A > T ]$ 
to $\Pr^y [ \xi_A > \tau_n ]$. We can write 
\eqnspl{e:Pdecomp1}
{ \Pr^y [ \xi_A > T ] 
  &= \Pr^y [ \xi_A > T,\, T < \tau_n ] \\
  &\qquad + \Pr^y [ \xi_A > \tau_n,\, T \ge \tau_n ]
      \Pr^y [ \xi_A > T | \xi_A > \tau_n,\, T \ge \tau_n ]. }
The first term on the right hand side of \eqref{e:Pdecomp1}
will be an error term, that we estimate as follows.
Let $\alpha > 0$ be a large parameter, that we will choose
in the course of the proof. 
\eqnspl{e:smallT}
{ \Pr^y [ \xi_A > T,\, T < \tau_n ]
  &\le \Pr^y [ \tau_n > \alpha (\log n) n^2 ] 
      + \Pr^y [ T < \alpha (\log n) n^2 ] \\
  &\le \exp( - c \alpha \log n )
      + \lambda \alpha (\log n) n^2 \\
  &\le C \lambda (\log n) n^2. }
The last step is justified, if we take 
$\alpha > (\frac{1}{\rho} - 2)/c$.

Let us consider the second term on the right hand
side of \eqref{e:Pdecomp1}. Using the strong Markov
property, and the memoryless property of $T$, we can write
\eqnspl{e:Pdecomp2}
{ &\Pr^y [ \xi_A > T | \xi_A > \tau_n,\, T \ge \tau_n ] \\
  &\qquad = \sum_{z \in \partial \bB(n)} 
     \Pr^y [ S(\tau_n) = z | \xi_A > \tau_n,\, T \ge \tau_n ]
     \Pr^z [ \xi_A > T ]. }
Let $H_n(z) := \Pr^0 [ S(\tau_n) = z ]$. The key step of the 
proof is to show that the first factor of the summand 
on the right hand side of \eqref{e:Pdecomp2} is 
essentially independent of $y$, and equals
$H_n(z) ( 1 + O(\frac{R}{n} \log \frac{n}{R} ) 
+ O (\lambda (\log n) n^2) )$.
Using the strong Markov property, and the memoryless 
property of $T$, we can write
\eqnspl{e:Pdecomp3}
{ &\Pr^y [ \xi_A > \tau_n,\, T \ge \tau_n,\, S(\tau_n) = z ] \\
  &\qquad = \sum_{w \in \partial \bB(2R)} 
     \Pr^y [ \xi_A > \tau_{2R},\, 
     T \ge \tau_{2R},\, S(\tau_{2R}) = w ] \\
  &\qquad\qquad\qquad\quad \times \Pr^w [ \xi_A > \tau_n,\, 
     T \ge \tau_n,\, S(\tau_n) = z ]. }
By \cite[Eqn.~(2.10)]{Lawler}, we have
\eqn{e:Lawler(2.10)}
{ \Pr^w [ \xi_A > \tau_n,\, S(\tau_n) = z ]
  = \Pr^w [ \xi_A > \tau_n ] H_n(z)
    \left( 1 + O \left( \frac{R}{n} 
    \log \frac{n}{R} \right) \right). }
Hence we want to estimate the effect of omitting the event
$T \ge \tau_n$ from the second probability on the
right hand side of \eqref{e:Pdecomp3}.
By a standard estimate \cite[Exercise 1.6.8]{Lawler},
we have 
$\Pr^w [ \xi_A > \tau_n ] 
\ge \Pr^w [ \xi_{\bB(R)} > \tau_n ] 
\ge c [\log \frac{n}{R}]^{-1}$.
We also have $H_n(z) \ge c n^{-1}$, by 
\cite[Lemma 1.7.4]{Lawler}. This implies that 
\eqn{e:Pdecomp4}
{ \Pr^w [ \xi_A > \tau_n,\, S(\tau_n) = z ] 
  \ge c \left( n \log \frac{n}{R} \right)^{-1}. }
We are now ready to estimate
\eqnspl{e:Pdecomp5}
{ &\Pr^w [ \xi_A > \tau_n,\, T < \tau_n,\, S(\tau_n) = z ] \\
  &\qquad \le \Pr^w [ \tau_n > \alpha (\log n) n^2 ] 
      + \Pr^w [ \xi_A > \tau_n,\, 
      T < \alpha (\log n) n^2,\, S(\tau_n) = z ] \\
  &\qquad \le \exp( - c \alpha \log n )
      + \lambda \alpha (\log n) n^2 
      \Pr^w [ \xi_A > \tau_n,\, S(\tau_n) = z ] \\
  &\qquad \le \Pr^w [ \xi_A > \tau_n,\, S(\tau_n) = z ]
      O( \lambda (\log n) n^2 ). }
Here we have used the lower bound \eqref{e:Pdecomp4}
and we require that $\alpha$ satisfy
$\alpha > (\frac{1}{\rho} - 1)/c$.
Putting the estimates \eqref{e:Lawler(2.10)} and
\eqref{e:Pdecomp5} together we have
\eqnsplst
{ \Pr^w [ \xi_A > \tau_n,\, T \ge \tau_n,\, S(\tau_n) = z ]
  &= \Pr^w [ \xi_A > \tau_n ] H_n(z) \\
  &\qquad\qquad \times 
    \left( 1 + O \left( \frac{R}{n} \log \frac{n}{R} \right) 
    + O ( \lambda (\log n) n^2 ) \right). }
Putting this back into \eqref{e:Pdecomp3}, we get
the key estimate:
\eqn{e:key}
{ \Pr^y [ S(\tau_n) = z | \xi_A > \tau_n,\, T \ge \tau_n ] 
  = H_n(z) \left( 1 + O \left( \frac{R}{n} 
    \log \frac{n}{R} \right) 
    + O (\lambda (\log n) n^2) \right). }
Inserting \eqref{e:key} into \eqref{e:Pdecomp2} 
and summing over $z$, we get
\eqn{e:Ddef}
{ \Pr^y [ \xi_A > T | \xi_A > \tau_n,\, T \ge \tau_n ]
  = D(\lambda,n,A) 
    + O \left( \frac{R}{n} \log \frac{n}{R} \right)
    + O(\lambda (\log n) n^2), }
where 
$D(\lambda,n,A) 
= \sum_{z \in \partial \bB(n)} H_n(z) \Pr^z [ \xi_A > T ]$.

Now we are ready to start analyzing the expression
\eqref{e:Laplformula}. 
Consider $y \sim \gamma(j-1)$, $y \not\in A$,
such that $\Pr^y [ \xi_A > \tau_{2R} ] > 0$.
The main contribution to $\Pr^y [ \xi_A > T ]$ will be
$\Pr^y [ \xi_A > T,\, \xi_A > \tau_n ]$.
An estimate very similar to \eqref{e:smallT}
yields
\eqnspl{e:tooshort}
{ \Pr^y [ \xi_A > T,\, \xi_A < \tau_n ]
  \le C \lambda (\log n) n^2. }
A lower bound for the main term is as follows.
\eqnspl{e:main}
{ &\Pr^y [ \xi_A > T,\, \xi_A > \tau_n ] \\
  &\qquad \ge \Pr^y [ \xi_A > \tau_{2R} ]
       \Pr^y [ \xi_A > \tau_{1/\sqrt{\lambda}} | \xi_A > \tau_{2R} ]
       \Pr^y [ \xi_A > T | \xi_A > \tau_{1/\sqrt{\lambda}} ]. }
The first factor on the right hand side of \eqref{e:main} 
is at least $c R^{-\beta}$, due to $\gamma \in \caP$. 
The second factor is at least the minimum over $z$ 
of the probability that random walk started at 
$z \in \partial \bB(2R)$ will exit 
$\bB(1/\sqrt{\lambda})$ before hitting $\bB(R)$. 
This is at least 
$c \log (2R/R) / \log [ (\sqrt{\lambda} R)^{-1} ] 
> c' / \log [ \lambda^{-1} ]$. 
We show that the third factor is 
bounded away from $0$. Due to the invariance principle,
with probability bounded away from $0$, a random walk started
on the boundary of $\bB(1/\sqrt{\lambda})$ will take
at least $1/\lambda$ steps before hitting 
$\bB(1/(2 \sqrt{\lambda}))$, and hence before
hitting $A$. On this event, the conditional probability of
$\xi_A > T$ is at least 
$P [ T \le \lambda^{-1}] \approx 1 - e^{-1}$. 
Putting the estimates together, we get
\eqn{e:main2}
{ \Pr^y [ \xi_A > T,\, \xi_A > \tau_n ]
  \ge c R^{-\beta} / \log [ \lambda^{-1} ]. }
The choice $\lambda = n^{-\rho}$ and the condition on 
$\rho$ and $\rho'$ ensure that 
$C \lambda (\log n) n^2$ is of smaller 
order than the right hand side of \eqref{e:main2}.
Putting \eqref{e:tooshort} and \eqref{e:main2}
together, we obtain 
\eqn{e:main3}
{ \Pr^y [ \xi_A > T ] 
  \ge c R^{-\beta} / \log [ \lambda^{-1} ]. }
Arguments similar to what led to \eqref{e:main3}, also
yield the simple lower bound 
\eqn{e:Dbound}
{ D(\lambda,n,A) 
  \ge c. }
We return to \eqref{e:Pdecomp1}. The estimates 
\eqref{e:Ddef}, \eqref{e:tooshort}, \eqref{e:main3}
and \eqref{e:Dbound} imply
\eqnspl{e:error1}
{ \Pr^y [ \xi_A > T ]
  &= O(\lambda (\log n) n^2) 
    + \left( \Pr^y [ \xi_A > \tau_n ] 
    + O(\lambda (\log n) n^2) \right) \\
  &\qquad\qquad\qquad\quad 
    \times \left( D(\lambda,n,A_j) 
    + O \left( \frac{R}{n} \log \frac{n}{R} \right)
    + O( \lambda (\log n) n^2 ) \right) \\
  &= \Pr^y [ \xi_A > \tau_n ] \left( D(\lambda,n,A_j) 
    + O \left( \frac{R}{n} \log \frac{n}{R} \right)
    + O \left( \lambda n^2 R^\beta / \log [ \lambda^{-1} ] \right) 
    \right). }
Consider now the case when $y \sim \gamma(j-1)$, 
$y \not\in A_j$, such that $\Pr^y [ \xi_A > \tau_{2R} ] = 0$. 
In this case we have
\eqn{e:error2}
{ \Pr^y [ \xi_A > T ] 
  \le \Pr^y [ T < \tau_{2R} ]
  \le C \lambda \alpha (\log n) n^2. }
Putting \eqref{e:error1} and \eqref{e:error2} into 
\eqref{e:Laplformula}, we get
\eqn{e:condj}
{ \hat{P}^\lambda (\gamma_j) 
  = \hat{P}^\lambda (\gamma_{j-1})
    \frac{\hat{P}^n (\gamma_{j})}{\hat{P}^n(\gamma_{j-1})}
    \left( 1 + O(\lambda n^2 R^\beta / \log [ \lambda^{-1} ] ) 
    + O \left( \frac{R}{n} \log \frac{n}{R} \right) \right). }
Iterating for $j = 1, \dots, R$ we get 
\eqnst
{ \hat{P}^\lambda(\gamma)
  = \hat{P}^n (\gamma) 
    \left( 1 + O(\lambda n^2 R^{1+\beta} / \log [ \lambda^{-1} ] )
    + O \left( \frac{R^2}{n} \log \frac{n}{R} \right) \right). }
The proposition follows, since
due to \cite[Proposition 7.4.2]{Lawler}, we have 
\eqnst
{ \hat{P}^n(\gamma) 
  = \hat{P}(\gamma) 
    \left( 1 + O(\frac{R^2}{n} \log \frac{n}{R} ) \right). }
\end{proof}

Our next step is to get an estimate on the probability that 
$\hat{S} \cap \bB(m)$ differs from
$\LE(S[0,T]) \cap \bB(m)$ for suitable $m$.
We will select $m$ of the form $m = \lambda^{-\rho''}$, and
require $\rho'' < \rho'$, so that $m \ll R$.
We will need the discrete Beurling estimate, stated below.
This was first proved by Kesten \cite{Kesten}; see 
\cite{LawLim} for a version more similar to what 
will be used here.

\begin{theorem}[Beurling estimate, \cite{LawLim}]
\label{thm:Beurling}
Suppose $m < N$, and $A \subset \Z^2$ contains a path 
from $\bB(m)$ to $\partial \bB(N)$. There exists a constant
$C < \infty$ such that for any $z \in \partial \bB(m)$ we have
\eqnst
{ \Pr^z [ \tau_N < \xi_A ] 
  \le C (m/N)^{1/2}. }
\end{theorem}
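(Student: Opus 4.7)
The plan is to iterate a single-scale estimate over dyadic annuli and extract the exponent $1/2$ from the continuous Beurling projection theorem. Set $r_j = 2^j m$ for $0 \le j \le J$ with $J = \lfloor \log_2(N/m) \rfloor$, and let $\sigma_j = \inf\{k \ge 0 : S(k) \notin \bB(r_j)\}$. Applying the strong Markov property successively at $\sigma_0, \sigma_1, \dots, \sigma_{J-1}$,
\begin{equation*}
\Pr^z[\tau_N < \xi_A] \le \prod_{j=0}^{J-1} q_j, \qquad q_j := \sup_{w \in \partial \bB(r_j) \setminus A} \Pr^w[\sigma_{j+1} < \xi_A].
\end{equation*}
If each $q_j \le q$ for some universal $q < 1$, the bound becomes $q^J \le C(m/N)^{-\log_2 q}$, and to recover the exponent $1/2$ one needs $q \le 1/\sqrt{2}$.

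The central step is thus the one-scale estimate: because $A$ contains a path from $\bB(m)$ to $\partial \bB(N)$, it necessarily contains a connected crossing of each annulus $\bB(r_{j+1}) \setminus \bB(r_j)$. I would claim that such a crossing forces
\begin{equation*}
\sup_{w \in \partial \bB(r_j)} \Pr^w[\sigma_{j+1} < \xi_A] \le \tfrac{1}{\sqrt{2}} + \varepsilon_j,
\end{equation*}
with $\varepsilon_j \to 0$ as $r_j \to \infty$. This is the scale-invariant discrete counterpart of Beurling's projection theorem: after rescaling by $r_j$ the annulus becomes $\bB_\infty(1,2)$, the random walk converges to planar Brownian motion by Donsker, and the continuous Beurling theorem identifies a straight radial slit as the extremal crossing, for which the crossing probability equals $1/\sqrt{2}$ exactly (via the conformal map $z \mapsto \sqrt{z}$ and reflection). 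The passage from Brownian motion to random walk can be carried out either by a Skorokhod coupling, using that $\xi_A$ is not too sensitive to small perturbations of the trajectory (the crossing path separates the inner and outer boundary, so close proximity to $A$ forces a hit within $O(1)$ steps), or by a direct estimate on discrete harmonic measure in doubly connected domains.

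The hardest point is obtaining the sharp constant $1/\sqrt{2}$ uniformly in the shape of the crossing. A weaker one-scale bound $q_j \le q < 1$ is essentially soft — it follows from the fact that a crossing of a dyadic annulus is necessarily hit with positive probability by any random walk starting on the inner boundary, uniformly in the shape of the crossing, by a standard harmonic-measure lower bound. That weaker bound would give an exponent $\alpha := -\log_2 q > 0$ rather than $1/2$, which is in fact all that certain applications need. To get the sharp exponent one must exploit the extremality of the straight slit, either through the conformal map argument above (with some care to deal with discrete corrections on small scales, which contribute only a bounded number of factors absorbed into $C$), or via a symmetrization/reflection argument directly on the discrete annulus. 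Once the sharp one-scale bound is available, the product over $j = 0, \dots, J-1$ yields the claimed $C(m/N)^{1/2}$, with the finitely many scales near $r_j \asymp m$ contributing only to the constant $C$.
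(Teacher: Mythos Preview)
The paper does not prove this theorem; it is quoted from \cite{LawLim} (see also Kesten's original \cite{Kesten}) and used as a black box. So there is no in-paper proof to compare your proposal against.

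As to the proposal itself: the dyadic iteration via the strong Markov property is exactly the standard skeleton, and your identification of the sharp one-scale constant $1/\sqrt{2}$ as the crux is correct. The weak point is the mechanism you propose for transferring the continuous Beurling bound to the discrete walk. Donsker-type weak convergence alone is not enough: the event $\{\sigma_{j+1} < \xi_A\}$ depends on whether the path hits a set $A$ that may be a one-dimensional curve, and weak convergence of paths does not control hitting times of thin sets. Your parenthetical remark that ``close proximity to $A$ forces a hit within $O(1)$ steps'' is false in general --- a random walk can pass within distance $1$ of a curve without hitting it. What is actually needed is either a KMT/Skorokhod strong coupling together with a quantitative estimate saying that if the Brownian path comes within distance $\delta r_j$ of $A$ then the coupled walk hits $A$ with high probability (this requires $A$ to be connected and uses a local gambler's-ruin or harmonic-measure argument), or --- as in \cite{LawLim} --- a direct discrete argument that bypasses the continuous theorem entirely and obtains the exponent $1/2$ from a discrete extremal-length or capacity computation. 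Either route works, but neither is as soft as the proposal suggests; this is precisely why \cite{LawLim} is a paper rather than an exercise.
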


The next proposition estimates the probability that the 
loop-erasure inside a ball $\bB(m)$ is affected
after the loop-erased path has reached distance
$R' > m$. Later on we are going to take $R' = (1/2) \sqrt{R}$
with $R$ as in Proposition \ref{prop:convLERW}.
We define 
\eqnsplst
{ \hat{\tau}^\lambda_{R'} 
  &= \inf \{ j \ge 0 : \hat{S}^\lambda(j) \in \partial \bB(R') \} \\
  \hat{\tau}_{R'} 
  &= \inf \{ j \ge 0 : \hat{S}(j) \in \partial \bB(R') \}. }

\begin{proposition}
\label{prop:2Dnotreturn}
Assume $d = 2$. Let $16m < R'$. 
Suppose that $\lambda \le (R')^{-4}$. 
There exists a constant $C < \infty$ such that
\eqnspl{e:2Dnotreturn1}
{ \Pr [ \text{$\hat{S}^\lambda (j) \not\in \bB(m)$ 
    for $j \ge \hat{\tau}^\lambda_{R'}$} ] 
  \ge 1 - C (m/R') \log(R'/m). }
Likewise, we have
\eqnspl{e:2Dnotreturn2}
{ \Pr [ \text{$\hat{S}(j) \not\in \bB(m)$ 
    for $j \ge \hat{\tau}_{R'}$} ] 
  \ge 1 - C (m/k') \log(k'/m). }
\end{proposition}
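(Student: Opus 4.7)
The plan is to condition on the initial segment of the LERW up to its first exit of $\bB(R')$, invoke the domain Markov property for LERW to reduce the problem to a random walk hitting estimate, and then control that estimate via the Beurling estimate (Theorem \ref{thm:Beurling}).

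For part (i), set $\gamma_* := \hat{S}^\lambda[0,\hat{\tau}^\lambda_{R'}]$, a self-avoiding path from $0$ to some tip $x_* \in \partial \bB(R')$ contained in $\bB(R')$. By the domain Markov property for LERW, which is a direct consequence of Wilson's algorithm, the conditional law of the continuation $\hat{S}^\lambda$ beyond $\hat{\tau}^\lambda_{R'}$, given $\gamma_*$, is that of the loop-erasure of a random walk $\tilde{S}$ started at $x_*$ and killed at $\tilde{T} \wedge \xi_{\gamma_* \setminus \{x_*\}}$, where $\tilde{T} \sim \Geom(\lambda)$ is independent. Since every vertex of the loop-erasure is visited by the underlying walk, the event that the continuation returns to $\bB(m)$ is contained in $\{\xi_{\bB(m)} < \tilde{T} \wedge \xi_{\gamma_*}\}$ for $\tilde{S}$ started at $x_*$; thus it suffices to bound this probability uniformly over admissible $\gamma_*$.

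The crux is that $\gamma_*$ is a self-avoiding path from $\bB(m)$ (through $0$) to $\partial \bB(R')$, and therefore acts as a Beurling barrier. I would decompose the trajectory of $\tilde{S}$ into excursions between $\partial \bB(R')$ and $\partial \bB(2R')$: each ``harmful'' excursion reaches $\bB(m)$ while avoiding $\gamma_*$ and then returns to $\partial \bB(R')$. Two applications of Theorem \ref{thm:Beurling} (once for the inbound leg to $\bB(m)$, once for the outbound leg avoiding the barrier) bound the probability of a single harmful excursion by $C(m/R')$, while the expected number of excursions $\tilde{S}$ performs before being killed or escaping near $\partial \bB(\lambda^{-1/2})$ contributes the $\log(R'/m)$ factor via a standard 2D harmonic measure computation; the assumption $\lambda \le (R')^{-4}$ ensures $\lambda^{-1/2} \gg R'$, so killing does not prematurely truncate the excursions. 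Part (ii) follows by the same argument with $\tilde{T} = +\infty$, conditioning instead on $\hat{S}[0,\hat{\tau}_{R'}]$ and invoking the domain Markov property for the infinite LERW, obtained by applying Wilson's algorithm in $\bB(N)$ and letting $N \to \infty$. The main obstacle is upgrading the single-Beurling bound $(m/R')^{1/2}$ to the sharper $(m/R')\log(R'/m)$; this forces the two-sided ``there-and-back'' excursion accounting together with careful control of the total number of excursions via 2D harmonic measure.
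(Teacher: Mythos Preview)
Your high-level plan --- condition on the initial segment $\gamma_* = \hat{S}^\lambda[0,\hat{\tau}^\lambda_{R'}]$ and bound the probability that the continuation visits $\bB(m)$ via two Beurling crossings --- matches the paper's. The gap is your statement of the domain Markov property. Given $\gamma_*$, the continuation of $\hat{S}^\lambda$ is \emph{not} the loop-erasure of a walk killed at $\tilde{T} \wedge \xi_{\gamma_*}$; it is the loop-erasure of a walk from $x_*$ \emph{conditioned on} $\xi_{\gamma_*} > \tilde{T}$ (this is Lemma~\ref{lem:Laplwithkill}, or equivalently the last-exit decomposition of $S$ at $x_*$, which is how the paper argues). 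A walk merely killed on hitting $\gamma_*$ could end on $\gamma_*$, contradicting self-avoidance of $\hat{S}^\lambda$. Wilson's algorithm does produce walks killed on hitting the existing tree, but that describes \emph{new} branches, not the continuation of an already-started branch.

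This misstatement bypasses the main difficulty. With the correct conditioning you must bound
\[
  \Pr^{x_*}\bigl[\, \xi_{\bB(m)} \le \tilde{T} \,\big|\, \xi_{\gamma_*} > \tilde{T} \,\bigr]
  \;=\; \frac{\Pr^{x_*}[\, \xi_{\bB(m)} \le \tilde{T},\ \xi_{\gamma_*} > \tilde{T} \,]}
             {\Pr^{x_*}[\, \xi_{\gamma_*} > \tilde{T} \,]},
\]
and the denominator is itself small: it carries a factor for escaping a ball of radius $R'/4$ about $x_*$ without hitting $\gamma_*$ (tiny, since $x_*$ is the tip of $\gamma_*$), and a factor $1/\log\bigl((\sqrt{\lambda}R')^{-1}\bigr)$ for then reaching distance of order $\lambda^{-1/2}$ without returning. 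The paper's proof is organised so that the upper bound on the numerator carries the \emph{same} two factors, which then cancel in the ratio; what survives is the product of two Beurling crossings $(m/R')^{1/2}\cdot(m/R')^{1/2}$ together with an extra $\log(R'/m)$. Your sketch, working with an unconditioned probability, never confronts the denominator.

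A related symptom: you attribute the $\log(R'/m)$ to ``the expected number of excursions before escaping near $\partial\bB(\lambda^{-1/2})$''. That count is of order $\log\bigl((\sqrt{\lambda}R')^{-1}\bigr)$, the wrong logarithm, and in the correct argument it is precisely what cancels against the conditioning. In the paper the $\log(R'/m)$ arises instead from a time-reversal step in bounding the \emph{inward} crossing: one reverses the walk from its last visit to $\partial\bB(2m)$ before entering $\bB(m)$, and pays $\bigl(\Pr^{z'}[\tau_{R'/4} < \xi_{\bB(m)}]\bigr)^{-1} \asymp \log(R'/m)$ to undo that conditioning before invoking Theorem~\ref{thm:Beurling}.
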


\begin{proof}
We estimate the probability that 
after the loop-erasure of $S[0,T]$ has reached 
$\partial \bB(R')$, the walk $S$ revisits 
$\bB(m)$, before time $T$.
Condition on the set 
$A = \hat{S}^\lambda[0,\hat{\tau}^\lambda_{R'}-1]$,
and let $x = \hat{S}^\lambda(\hat{\tau}^\lambda_{R'})$. 
Let 
\eqnst
{ \rho 
  = \max \{ j < T : S(j) = S(\hat{\tau}^\lambda_{R'}-1) \}. }
The law of $S[\rho+1,T]$ is that of a random walk 
started at $x$ and conditioned on $\xi_A > T$. 
We show that
\eqn{e:2Dreturn}
{ \Pr^x [ \xi_{\bB(m)} \le T | \xi_A > T ] 
  \le C_1 (m/R') \log(R'/m), }
which implies the claim of the proposition.
The walk first has to exit $\bB(x,R'/4)$ without
hitting $A$. Then it has to cross $\bB(R'/4) \setminus \bB(m)$ 
without hitting $A$, in order to visit $\bB(m)$. 
Both of these have to occur before time $T$.
Following the visit to $\bB(m)$, the walk has to 
avoid $A$ until time $T$.
Since $1/\sqrt{\lambda}$ is of larger order than $R'$,
this means that the walk will essentially have to 
cross $\bB(R'/4) \setminus \bB(m)$ again without hitting $A$,
and stay away from $A$ after that.
Since $A$ contains a path from $\bB(m)$ to $\partial \bB(R'/4)$,
the Beurling estimate Theorem \ref{thm:Beurling} 
can be used to bound the probability
that $A$ is not hit during the crossings. This will yield
a bound $C_2 (m/R')^{1/2} (m/R')^{1/2} \log(R'/m)$, where
the $\log$-factor arises due to a technicality. 
The probability of not hitting $A$ during the final stretch
will yield a factor $C/\log (1/\sqrt{\lambda}R')$, 
that will be used to cancel the 
effect of the conditioning.

We first get a lower bound for the probability of the
conditioning in \eqref{e:2Dreturn}.
Let $c_1$ be a constant such that 
the set $\{ y \in \partial \bB(x,R'/4) : |y| > (1+c_1)R' \}$
contains at least a fraction $c_2 > 0$ of $\partial \bB(x,R'/4)$. 
Let us write $\tau_{x,R'}$ for $\tau_{\bB(x,R'/4)}$.
We have
\eqnspl{e:exitk'}
{ &\Pr^x [ \xi_A > \tau_{x,R'},\, T \ge \tau_{x,R'},\, 
      |S(\tau_{x,R'})| > (1+c_1)R' ] \\
  &\qquad\qquad \ge \Pr^x [ \xi_A > \tau_{x,R'},\, 
      |S(\tau_{x,R'})| > (1 + c_1)R' ] \\
  &\qquad\qquad\quad - \Pr^x [ \tau_{x,R'} > \alpha (\log R') (R')^2 ] \\
  &\qquad\qquad\quad - \Pr^x [ T < \alpha (\log R') (R')^2 ]. }
We claim that the subtracted terms in \eqref{e:exitk'}
are of lower order than the first term.
The first term is at least the probability 
that $S$ exists $\bB(x,R'/4)$ without returning
to $\bB(R')$, and reaches a distance of order
$R'$ from $\bB(R')$. This probability is at least $c/R'$
\cite[Exercise 1.6.8]{Lawler}. 
The second term is at most $\exp( -c \alpha \log R')$,
which for $\alpha$ large enough is of smaller order
than the first term. The third term is 
$O(\lambda (\log R') (R')^2) = o(1/R')$, by the
condition on $\lambda$.
It is intuitive that given the event 
$\xi_A > \tau_{x,R'}$, we have 
$|S(\tau_{x,R'})| > (1 + c_1)R'$ with 
conditional probability bounded away from $0$.
A proof of this can be found in 
\cite[Proposition 3.5]{Masson}.
Hence we have
\eqn{e:exitk'2}
{ \Pr^x [ \xi_A > \tau_{x,R'},\, T \ge \tau_{x,R'},\, 
      |S(\tau_{x,R'})| > (1 + c_1)R' ]
  \ge c \Pr^x [ \xi_A > \tau_{x,R'}\,  ]. } 
The walk having reached distance
$(1 + c_1)R'$, the probability that the walk
will avoid $A$ until time $T$ is at least 
the probability that
(i) it exits $\bB(2/\sqrt{\lambda})$
without hitting $A$; and
(ii) it takes at least $1/\lambda$ steps 
before entering $\bB(1/\sqrt{\lambda})$;
and (iii) $T \le 1/\lambda$.
The probability of (i) is
at least $c \log(2(\sqrt{\lambda} R')^{-1})$. 
The probability of (ii) is at least $c$, due to
the invariance principle. The probability of
(iii) is bounded away from $0$.
This gives us
\eqn{e:separated}
{ \Pr^x [ \xi_A > T ]
  \ge c \Pr^x [ \xi_A > \tau_{x,R'} ]
      \log (2 (\sqrt{\lambda} R')^{-1}). } 

We now come to deriving an upper bound for
$\Pr^x [ \xi_{\bB(m)} \le T,\, \xi_A > T ]$.
First consider the walk up to its first 
exit from $\bB(x,R'/4)$. Then we see that the 
event
$\{ \xi_A > \tau_{x,R'},\, T \ge \tau_{x,R'} \}$ 
has to occur. We neglect the requirement that
$T \ge \tau_{x,R'}$, and use 
$\Pr^x [ \xi_A > \tau_{x,R'} ]$ as an upper bound.
We now consider the walk starting on 
$\partial \bB(x,R'/4)$.
We need to make precise the estimates on the
probability of crossing $\bB(R'/4) \setminus \bB(m)$.
For the first crossing, we can again neglect
the event $T \ge \xi_{\bB(m)}$.
We introduce the notation 
\eqnsplst
{ \xi_1 
  &= \inf \{ j \ge 0 : S(j) \in \partial \bB(R'/4) \} \\
  \xi_2
  &= \inf \{ j \ge \xi_1 : S(j) \in \bB(m) \} \\
  \bar{\xi}_1
  &= \sup \{ \xi_1 \le j \le \xi_2 : 
     S(j) \in \partial \bB(R'/4) \} \\
  Y 
  &= S(\bar{\xi}_1) \\
  Z
  &= S(\xi_2) \\
  \bar{\xi}'_1
  &= \sup \{ \xi_1 \le j \le \xi_2 : 
     S(j) \in \partial \bB(R'/8) \} \\
  \bar{\xi}'_2  
  &= \sup \{ \xi_1 \le j \le \xi_2 :
     S(j) \in \partial \bB(2m) \} \\
  Z'
  &= S(\bar{\xi}'_2). }
Fix $y \in \partial \bB(R'/4)$, $z \in \bB(m)$, and
$z' \in \partial \bB(2m)$.
Let $\tilde{S}$ denote a random walk started at $z$.
We will use tildes for stopping times corresponding
to $\tilde{S}$. Conditioned on the event 
$\{ \xi_2 < \infty,\, Y = y,\, Z = z,\, Z' = z' \}$,
the time reversal of $S[\bar{\xi}'_1, \bar{\xi}'_2]$ 
has the same law as 
$\tilde{S}[\tilde{\tau}_{2m},\tilde{\tau}_{R'/8}]$ 
conditioned on the event 
\eqnst
{ \tilde{E}_y 
  = \{ \tilde{S}(\tilde{\tau}_{2m}) = z',\, 
    \tilde{\tau}_{R'/4} < \tilde{\xi}_{\bB(m)},\, 
    \tilde{S}(\tilde{\tau}_{R'/4}) = y \}. } 
We have 
\eqnst
{ \Pr^{z'} [ \tilde{\tau}_{R'/4} < \tilde{\xi}_{\bB(m)} ] 
  \ge \frac{C_1}{\log(R'/m)}. }
Due to the Harnack principle 
\cite[Theorem 1.7.6]{Lawler}, conditioning on 
$\tilde{S}(\tilde{\tau}_{R'/4}) = y$ affects the 
probability of 
$\{ \tilde{S}[\tilde{\tau}_{2m},\tilde{\tau}_{R'/8}] 
\cap A = \es \}$ by a factor that is bounded away from 
$0$ and $\infty$.
Hence it follows that 
\eqnspl{e:inwards}
{ &\Pr [ \xi_2 < \infty,\, S[\xi_1,\xi_2] \cap A = \es ] \\
  &\qquad \le \sup_{z',y} \Pr^{z'} 
    [ \tilde{S}[0,\tilde{\tau}_{R'/8}] 
    \cap A = \es | \tilde{E}_y ] \\
  &\qquad \le C_1 \log(R'/m) \sup_{z'} \Pr^{z'} 
    [ \tilde{S}[0,\tilde{\tau}_{R'/8}] 
    \cap A = \es | \tilde{S}(\tilde{\tau}_{R'/4}) = y ] \\
  &\qquad \le C_2 \log(R'/m) \sup_{z'} \Pr^{z'} 
    [ \tilde{S}[0,\tilde{\tau}_{R'/8}] 
    \cap A = \es ] \\
  &\qquad \le C_3 (m/R')^{1/2} \log(R'/m). }
In the last step, we use Theorem \ref{thm:Beurling}.

For the other crossing of $\bB(R'/4) \setminus \bB(m)$
we apply Theorem \ref{thm:Beurling} directly.
Due to the memoryless property of $T$, we may assume
that the clock of $T$ is starting at time $\xi_2$.
Let
\eqnst
{ \tau_1 
  = \inf \{ j \ge \xi_2 : S(j) \not\in \bB(R'/4) \}. }
Then 
\eqnspl{e:outwards}
{ \Pr [ S[\xi_2,\tau_1] \cap A = \es | \xi_2 < \infty ]
  &\le \sup_{z \in \partial \bB(m)} 
      \Pr^z [ S[0,\tau_{R'/4}] \cap A = \es ] \\
  &\le C_4 (m/R')^{1/2}. }
The probability of $T < \tau_1$ can be estimated 
similarly to \eqref{e:exitk'}, and is of
smaller order than the right hand side 
of \eqref{e:outwards}. Therefore we also have
\eqn{e:outwards2}
{ \Pr [ S[\xi_2,\tau_1] \cap A = \es,\, T \ge \tau_1 | 
      \xi_2 < \infty,\, T \ge \xi_2 ]
  \le C_4 (m/R')^{1/2}. }

We finally bound the probability that $A$ is not hit
between $\tau_1$ and $T$, given that $T \ge \tau_1$. 
Consider $n' = \lambda^{-1/2+\eps}$. We have
$\Pr [ T \le \tau_{n'} | T \ge \tau_1 ] 
= O(\lambda (\log n') (n')^2 )
= o(\lambda^\eps)$. 
Hence we are going to consider the probability that 
the walk avoids $A$ up to time $\tau_{n'}$. Let 
\eqnst
{ \tau_2
  = \inf \{ j \ge \tau_1 : 
    S(j) \not\in \bB(2R') \}. }
Define inductively the sequence of stopping times 
\eqnsplst
{ \rho_1
  &= \inf \{ j : \tau_2 \le j \le \tau_{n'},\, 
    S(j) \in \bB(R'/2) \} \\
  \sigma_1
  &= \inf \{ j \ge \rho_1 : 
    S(j) \not\in \bB(2R') \} \\
  \rho_{i+1}
  &= \inf \{ j : \rho_i \le j \le \tau_{n'},\, 
    S(j) \in \bB(R'/2) \} \qquad i \ge 1 \\
  \sigma_{i+1}
  &= \inf \{ j \ge \rho_{i+1} : 
    S(j) \not\in \bB(2R') \}. }
Let $B_i := \{ \sigma_i < \xi_A \}$, and
let $\caF_i$ and $\caG_i$, respectively, denote the 
$\sigma$-algebras generated by events up to time $\rho_i$
and $\sigma_i$, respectively. Also let
$\caG_0$ be the $\sigma$-algebra generated by events 
up to time $\tau_2$. Since $A$ contains a 
path from $0$ to $\partial \bB(R')$ we have
$\Pr [ B_i | \caF_i ] \le c < 1$, $i \ge 1$. By considering 
the walk between $\sigma_{i-1}$ and $\rho_i$, we also have
\eqnst
{ \Pr [ \rho_i = \infty | \caG_{i-1} ] 
  \le \frac{C}{\log(n'/R')}
  \le \frac{C'}{\log((\sqrt{\lambda}R')^{-1})}. }
Write 
\eqnsplst
{ F_i 
  &= \cap_{\ell=1}^{i-1} \{ \rho_\ell < \infty,\, B_\ell \} 
     \cap \{ \rho_i < \infty \} \\
  G_i
  &= \cap_{\ell=1}^i \{ \rho_\ell < \infty,\, B_\ell \}. } 
Hence we deduce
\eqnspl{e:lastbit}
{ \Pr [ S[\tau_2,\tau_{n'}] \cap A = \es ]
  &\le \Pr \left[ \cup_{i=1}^\infty \left[ \left(
      \cap_{\ell=1}^{i-1} \{ \rho_\ell < \infty,\, B_\ell \} 
      \right) 
      \cap \{ \rho_i = \infty \} \right] \right] \\
  &\le \sum_{i=1}^\infty \left( \prod_{\ell=1}^{i-1} 
      \Pr [ \rho_\ell < \infty | G_{\ell-1} ]
      \Pr [ B_\ell | F_\ell] \right)
      \Pr [ \rho_i = \infty | G_{i-1} ] \\
  &\le \sum_{i=1}^\infty c^{i-1} \frac{C}{\log(n'/R')} \\
  &\le \frac{C'}{\log((\sqrt{\lambda}R')^{-1})}. }
Using the Strong Markov property, we can combine the
bounds \eqref{e:inwards}, \eqref{e:outwards} and 
\eqref{e:lastbit}, and together with 
\eqref{e:separated} we deduce \eqref{e:2Dreturn}.

Finally, letting $\lambda \to 0$ we obtain 
the second statement of the Proposition, 
as the bounds are uniform in $\lambda$.
\end{proof}

We are ready to prove the analogue of 
Lemma \ref{lem:LEnotreturn} in the $d = 2$ case.

\begin{proposition}
\label{prop:2Dagree}
Assume $d = 2$. Let $R$, $n$ and $\lambda$ 
satisfy the relations as in 
Proposition \ref{prop:convLERW}. Let 
$m$, $R'$ and $\lambda$ satisfy the relations 
as in Proposition \ref{prop:2Dnotreturn}.
There exists a coupling between
$\LE(S[0,T])$ and $\hat{S}[0,\infty)$, 
such that if $R = 4 (R')^2$, then 
\eqnspl{e:2Dagree}
{ &\Pr \left[ \parbox{6.5cm}{$\LE(S[0,T]) \cap \bB(m) =
      \hat{S}^\lambda[0,R] \cap \bB(m)$; \\  
      $\hat{S}[0,\infty) \cap \bB(m) =
      \hat{S}[0,R] \cap \bB(m)$; \\
      $\hat{S}^\lambda[0,R] = \hat{S}[0,R]$} \right] \\
  &\qquad \geq 1 - O( (m/R') \log(R'/m) )
      - O( (\log R) R^{2-\beta} ) \\
  &\qquad\quad  - O( \lambda n^2 R^{1+\beta} / \log [ \lambda^{-1} ] )
      - O \left( \frac{R^2}{n} \log \frac{n}{R} \right). }
\end{proposition}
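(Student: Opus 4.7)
The plan is to combine Propositions \ref{prop:convLERW} and \ref{prop:2Dnotreturn} through a union bound, after first realizing a coupling of the initial length-$R$ segments of the two loop-erased walks. I would first construct a coupling of $\hat{S}^\lambda[0,R]$ and $\hat{S}[0,R]$ that realizes their agreement with high probability. Summing the pointwise ratio estimate from Proposition \ref{prop:convLERW} over $\gamma\in\caP$ gives
\[
\sum_{\gamma\in\caP}|\hat{P}^\lambda(\gamma)-\hat{P}(\gamma)| \le O(\eps_1)\hat{P}(\caP) \le O(\eps_1),
\]
with $\eps_1 := \lambda n^2 R^{1+\beta}/\log[\lambda^{-1}] + (R^2/n)\log(n/R)$; the same summation yields $\hat{P}^\lambda(\caP) = \hat{P}(\caP)(1+O(\eps_1)) \ge 1 - C(\log R)R^{2-\beta} - O(\eps_1)$. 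Extending both laws to probability measures on $\Gamma_R \cup \{\dagger\}$ (with $\dagger$ encoding the event that $S$ is killed before the loop-erasure reaches length $R$), the total variation distance between them is $O(\eps_1)+O((\log R)R^{2-\beta})$, and a maximal coupling produces a joint law under which $\hat{S}^\lambda[0,R] = \hat{S}[0,R]\in\caP$ with probability at least $1 - O(\eps_1) - O((\log R)R^{2-\beta})$.

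Next, I would observe a deterministic volume fact: whenever $\hat{S}^\lambda[0,R]$ is defined it consists of $R+1$ distinct lattice sites, and since $|\bB(R')|\le \pi(R'+1)^2 + O(R') < R = 4(R')^2$ for $R'$ exceeding an absolute constant, such a self-avoiding path cannot remain in $\bB(R')$; hence $\hat{\tau}^\lambda_{R'}\le R$, and similarly $\hat{\tau}_{R'}\le R$. This is precisely where the relation $R=4(R')^2$ is used. Proposition \ref{prop:2Dnotreturn} then supplies the events $\{\hat{S}^\lambda(j)\notin\bB(m)\ \forall j\ge\hat{\tau}^\lambda_{R'}\}$ and $\{\hat{S}(j)\notin\bB(m)\ \forall j\ge\hat{\tau}_{R'}\}$, each with probability at least $1-O((m/R')\log(R'/m))$. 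On the intersection of these two events with the coupling event of the previous paragraph, the third condition in \eqref{e:2Dagree} holds by construction, while the first two reduce to
\[
\LE(S[0,T]) \cap \bB(m) = \hat{S}^\lambda[0,\hat{\tau}^\lambda_{R'}] \cap \bB(m) = \hat{S}^\lambda[0,R] \cap \bB(m),
\]
and the analogous identity for $\hat{S}$. A union bound over these three events then yields the claimed bound \eqref{e:2Dagree}.

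The main obstacle is the coupling step: I must convert the multiplicative pointwise control of Proposition \ref{prop:convLERW} into a genuine coupling of random paths, which requires careful bookkeeping of the early-killing mass and of the small but nonzero $\caP^c$-mass under both laws. Once this coupling is in hand, the volume bound and the direct application of Proposition \ref{prop:2Dnotreturn} are essentially bookkeeping by union bound, and no further LERW estimates are needed.
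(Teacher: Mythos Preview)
Your proposal is correct and follows essentially the same approach as the paper: use the pigeonhole/volume fact that a self-avoiding path of length $R=4(R')^2$ must exit $\bB(R')$, apply Proposition~\ref{prop:2Dnotreturn} for the two non-return events, couple the length-$R$ initial segments via Proposition~\ref{prop:convLERW}, and take a union bound. Your explicit conversion of the pointwise ratio estimate into a total-variation bound and maximal coupling (including the $\dagger$ bookkeeping for early killing) fills in detail that the paper leaves implicit; the only point you might state more explicitly is that the maximal coupling of the length-$R$ segments is then extended to a coupling of the full paths $\LE(S[0,T])$ and $\hat S[0,\infty)$ by sampling each continuation from its conditional law, so that the marginal bounds from Proposition~\ref{prop:2Dnotreturn} still apply.
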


\begin{proof}
A self-avoiding walk of length $R = 4 (R')^2$ 
necessarily visits $\partial \bB(R')$. 
Consider the events in \eqref{e:2Dnotreturn1} and
\eqref{e:2Dnotreturn2}. On these events,
$\LE(S[0,T]) \cap \bB(m) = \hat{S}^\lambda[0,R] \cap \bB(m)$,
and $\hat{S}[0,\infty) \cap \bB(m) = \hat{S}[0,R] \cap \bB(m)$.
Due to Proposition \ref{prop:convLERW}, there
exists a coupling between 
$\hat{S}[0,R]$ and $\hat{S}^\lambda[0,R]$ such that
the two are identical with probability 
at least 
\eqnst
{ 1 - O( (\log R) R^{2-\beta} ) 
  - O( \lambda n^2 R^{1+\beta} / \log [ \lambda^{-1} ] )
  - O \left( \frac{R^2}{n} \log \frac{n}{R} \right). }
Hence we obtain the lemma.
\end{proof}

From this point on, the proof of Theorem \ref{thm:main}
is fairly similar to the
$d = 3$ case. Using the notation of 
Section \ref{sec:strategy}, the path $\pi^{(0)}_0$
is distributed as $\hat{S}[0,\infty)$, while
the path $\pi^{(\gamma)}_0$ is distributed as
$\LE(S[0,T])$. Proposition \ref{prop:2Dagree}
gives a coupling of the two paths. 
Let $z_1, \dots, z_N$ be a list of all vertices in 
$\partial_\ext B(k)$, and let $z_{N+1}, \dots, z_{N+M}$
be a list of all vertices in $B(k)$. Let 
$S^i$ be independent random walks started at $z_i$,
with geometric killing times $T^i$, 
$i = 1, \dots, N+M$. We use Wilson's algorithm 
with the walks $S^i$ to construct both
$\pi^{(\gamma)}_{z_i}$ and $\pi^{(0)}_{z_i}$.
Write $\xi^i_A$, $\tau^i_N$, etc. for hitting
and exit times associated with $S^i$.

Let $E_0$ denote the event on the left hand side
of \eqref{e:2Dagree}. Assume this event occurs, and
define
$E_i = \{ \xi^i_{\pi^{(0)}_0} < \tau^i_m, T^i > \tau^i_m \}$,
$i = 1, \dots, N$.
On the event 
$E_0 \cap (\cap_{i=1}^N E_i)$, the conditions (i)--(iii)
of Section \ref{sec:strategy} hold for the paths
$\pi^{(0)}_0, \pi^{(0)}_{z_1}, \dots, \pi^{(0)}_{z_N}$ and
$\pi^{(\gamma)}_0, \pi^{(\gamma)}_{z_1}, \dots, \pi^{(\gamma)}_{z_N}$.
The Beurling estimate and an argument similar to 
\eqref{e:smallT} gives 
\eqn{e:Eibound}
{ \Pr [E_i^c \,|\, E_0 ] 
  \le C (k/m)^{1/2} + C \lambda (\log m) m^2. }
Note that the union of the paths
$\pi^{(0)}_0 \cup (\cup_{i=1}^N \pi^{(0)}_{z_i})$ separate $B(k)$ from
$\partial \bB(m)$. Hence the walks started at
$z_{N+1}, \dots, z_{N+M}$ necessarily hit the
earlier paths before exiting $B(k)$. Let
$F_i = \{ T^i > \tau^i_m \}$, $i = N+1, \dots, N+M$.
On the event $E_0 \cap \left( \cap_{i=1}^N E_i \right) \cap
\left( \cap_{i=N+1}^{N+M} F_i \right)$, the required
coupling of paths is successful. Note that we have
\eqn{e:Fibound}
{ \Pr [ F_i^c \,|\, E_0 \cap (\cap_{i=1}^N E_i) ] 
  \le C \lambda (\log k) k^2. }
Combining \eqref{e:Eibound} and \eqref{e:Fibound}
we get the following bound for the right hand side 
of \eqref{e:mainthm}:
\eqnsplst
{ &\Pr [ E_0^c ] + \sum_{i=1}^N \Pr [ E_i^c \,|\, E_0 ]
      + \sum_{i=N+1}^{N+M} 
      \Pr [ F_i^c \,|\, E_0 \cap (\cap_{i=1}^N E_i) ] \\
  &\qquad \le C (m/R') \log (R'/m) + C (\log R) R^{2-\beta} 
      + C \lambda n^2 R^{1+\beta} / \log [ \lambda^{-1} ] \\
  &\qquad\quad + C (R^2/n) \log (n/R)
      + C (k^{3/2}/m^{1/2}) + C k \lambda (\log m) m^2
      + C (\log k) k^4 \lambda. } 
Note that since $k < R$ and $m < n$, the term 
$k \lambda (\log m) m^2$ is of smaller order than
the third term. Likewise, since $\beta > 2$, 
the term $(\log k) k^4 \lambda$
is of smaller order than the third term. Omitting
these terms we have the upper bound:
\eqnspl{e:prebound}
{  &C (m/R') \log (R'/m) + C (\log R) R^{2-\beta} 
      + C \lambda n^2 R^{1+\beta} / \log [ \lambda^{-1} ] \\
   &\qquad + C (R^2/n) \log (n/R)
      + C (k^{3/2} / m^{1/2}). } 

We now choose the parameters. Setting 
$m^{-1/2} = m (R')^{-1}$ will make 
the first term of the same order as the last term 
(up to a logarithm). Hence we will choose 
$R' = m^{3/2}$, and hence $R = C m^3$.
We also set $R^2 n^{-1} = m^{-1/2}$, which makes
the fourth term the same order as the last term
(up to a logarithm). Therefore we take
$n = m^{13/2}$. We set
$R^{2-\beta} = R^2 n^{-1}$, which makes the second 
term the same order as the fourth. Hence we
choose $\beta$ determined by the relation: 
$R^\beta = n$. Finally, we set
$\lambda n^2 R^{1+\beta} = \lambda n^3 R = m^{-1/2}$,
which makes the third term the same order as the last
one. This yields: $\lambda = m^{-23}$. 
Hence the optimal choice of $m$ in terms of $\lambda$
is $m = \lambda^{-1/23}$ ($\rho'' = 1/23$). 
This determines the other parameters as: 
$n = \lambda^{-13/46}$ ($\rho = 13/46$), 
$R = 4 \lambda^{-3/23}$ ($\rho' = 3/23$), 
$\beta = 13/2$, and
$R' = \lambda^{-3/46}$. We need 
$2k < m = \lambda^{-1/23}$, and hence
$\lambda \le \lambda_0 := (2k)^{-23}$.
With these choices the required relations 
between the parameters are satisfied: 
$\rho'' < \rho'/2$, $\rho' < \rho < 1/2$ and
\eqnst
{ \beta > 2; \qquad
  1 - 2 \rho > (1+\beta) \rho'; \qquad
  2 \rho' < \rho; \qquad
  (\rho'/2)4 < 1. }
Hence Propositions \ref{prop:convLERW}, 
\ref{prop:2Dnotreturn} and \ref{prop:2Dagree}
apply, and the bound in \eqref{e:prebound}
reduces to $C k^{3/2} \lambda^{1/46-o(1)}$.

The better upper bound is obtained by
setting each term equal to $k^{3/2} m^{-1/2}$.
This yields: $R' = k^{-3/2} m^{3/2}$, 
$R = 4 k^{-3} m^3$, $n = m^{13/2} k^{-15/2}$,
$R^\beta = n$ and $\lambda = k^{27} m^{-23}$. 
Hence the optimal choice of the parameters 
in terms of $\lambda$ and $k$ is: 
$m = \lambda^{-1/23} k^{27/23}$,
$R' = \lambda^{-3/46} k^6$,
$R = 4 \lambda^{-3/23} k^{12}$,
$n = \lambda^{-13/46} k^{3/23}$. 
The restrictions on the parameters are satisfied 
as follows: $2k < m$, $16m < R'$, $R < n < \lambda^{-1/2}$
are automatic for $\lambda \le \lambda_1$, with 
$\lambda_1$ independent of $k$.
The condition $\beta > 2$ can be satisfied if
\eqnst
{ \frac{\frac{13}{46} \log(1/\lambda) 
    + \frac{3}{23} \log k}{\log 4 
    + \frac{3}{23} \log(1/\lambda)
    + 12 \log k}
  > 2, }
which holds if $\lambda < 16^{-46} k^{-46(24-\frac{3}{23})}$.
So we can take $C_0 > 46 (24 - \frac{3}{23})$ and
$c_0 = 16^{-46}$. The requirements $1-2\rho < (1+\beta) \rho'$,
$2 \rho' < \rho$ and $(\rho'/2)4 < 1$ 
are automatically satisfied by the choice of the
exponents. Hence for $\lambda \le \lambda_0 := c_0 k^{-C_0}$ 
we get the upper bound $C k^{3/2} \lambda^{1/46-o(1)} k^{-27/46}$.
This proves the theorem in the $d = 2$ case.
\qed

\medbreak

\noindent{\bf Acknowledgements.}
I thank Frank Redig and Ellen Saada for helpful comments.

\end{document}